\newtheorem*{example}{\bf Example}
\newtheorem*{remark}{\bf Remark}
\newtheorem{theorem}{\bf Theorem}[section]
\newtheorem{proposition}[theorem]{\bf Proposition}
\newtheorem{lemma}[theorem]{\bf Lemma}
\newtheorem{corollary}[theorem]{\bf Corollary}
\def\C{{\mathbb C}}
\def\N{{\mathbb N}}
\def\R{{\mathbb R}}
\def\B{\mathbb{B}}
\def\supp{\textup{supp}}
\title{Lebesgue points of functions in the complex Sobolev space}
\author{Gabriel Vigny}
\address{LAMFA, Universit\'e de Picardie Jules Verne, 33 rue Saint-Leu, 80039 AMIENS Cedex 1, FRANCE}
\email{gabriel.vigny@u-picardie.fr}
\author{Duc-Viet Vu}
\address{Department of Mathematics and Computer Sciences, University of Cologne, Germany}
\email{dvu@uni-koeln.de}
\thanks{Both authors are partially supported by the ANR-DFG grant QuaSiDy, grant no ANR-21-CE40-0016.}
\date{\today}
\begin{document}

\begin{abstract} Let $\varphi$ be a function in the complex Sobolev space $W^*(U)$, where $U$ is an open subset in $\C^k$.  We show that the complement of the set of Lebesgue points of $\varphi$ is pluripolar.  The key ingredient in our approach is to show that $|\varphi|^\alpha $ for $\alpha \in [1,2)$ is locally bounded from above by a plurisubharmonic function.
\end{abstract}

\maketitle

\medskip

\noindent
{\bf Classification AMS 2020}:  32Uxx, 37Fxx.

\medskip

\noindent
{\bf Keywords:} Closed positive current, Capacity, Lebesgue point, Pluripolar set. 

\section{Introduction} \label{introduction}

Let $U$ be a bounded open set in $\C^k$ endowed with the standard Euclidean form $\omega:= \frac{1}{2}dd^c |z|^2$ (recall that $d=\partial + \bar{\partial}$ and $d^c:=\frac{i}{2 \pi} (\bar \partial -\partial)$). Let $W^{1,2}(U)$ be the Sobolev space of function $\varphi$ in $L^2(U)$ such that $\nabla \varphi \in L^2(U)$. Let $W^*(U)$ be the subspace of $W^{1,2}(U)$ consisting of $\varphi \in  W^{1,2}(U)$ such that  there exists a positive closed current $T$ of bidegree $(1,1)$ and of finite mass (i.e., $\int_U T \wedge \omega^{k-1} < \infty$) on $U$ such that  
\begin{equation}\label{bound_by_current}
d \varphi \wedge d^c \varphi \leq T.
\end{equation}

When $k=1$, then $W^*(U)=W^{1,2}(U)$ since $d \varphi \wedge d^c \varphi$  is already a positive measure. The space $W^*(U)$, which we call \emph{the Dinh-Sibony-Sobolev space} (in the literature, it is also called the complex Sobolev space), was introduced by Dinh and Sibony in \cite{DinhSibonyW} to show the exponential decay of correlations for $C^\alpha$ observables in complex dynamics. Indeed, the importance of $W^*(U)$ lies in the fact that, by its very definition, it takes into account the complex structure (which $W^{1,2}$ does not) and it is stable under birational transformation. Since then, the space $W^*(U)$ has provided many applications, e.g. in complex dynamics \cite{Vignydecay, Vu_equilibrium, bianchi2022equilibrium}, in random matrix theory \cite{DinhKaufmannWu}, in the study of Monge-Amp\`ere equations \cite{DinhKoloNguyen,Vu-log-diameter}.  

Recall that $A \subset U$ is said to be a pluripolar set  if there exists a plurisubharmonic (psh) function $\varphi$ on $U$ such that % for every $x \in A$, there exists an open neighborhood $V$ of $x$ in $U$ and a psh function $u$ on $V$ such that 
$A \subset \{\varphi =  -\infty\}$. % By Josefson's theorem \cite{Josefson}, every locally pluripolar set $A$ is indeed globally pluripolar, this means that $A$ is  contained in $\{u=- \infty\}$ for some global psh function $u$ in $\C^k$.  
For every Borel set $K$ in $U$, the (Bedford-Taylor) \emph{capacity} of $K$ in $U$ (see \cite{BedfordTaylor}) is given by
$$\mathrm{Cap}(K,U):= \sup \bigg\{ \int_K (dd^c v)^k:  0 \le v \le 1, \, \text{$v$ is  psh on $U$} \bigg\},$$
and  Borel sets of zero capacity in $U$  are exactly pluripolar sets. % here the abbreviation ``psh" stands for ``plurisubharmonic". 
%We write $\mathrm{Cap}(K)$ for $\mathrm{Cap}(K,U)$ if $U$ is clear from the context.  

 Following Dinh and Sibony, in \cite{VignyW}, the first author showed that $W^*(U)$ is actually a Banach space endowed with the norm
\[ \|\varphi\|_*= \int_U |\varphi| \omega^{k} +  \inf \left(\int_U T \wedge \omega^{k-1}\right)^{1/2} \]    
where the infimum is taken over all the positive closed current of bidegree $(1,1)$ satisfying \eqref{bound_by_current}. This norm allows to define a functional capacity which is comparable to Bedford-Taylor capacity  and the elements of $W^*(U)$ are continuous outside open subsets of $U$ of arbitrarily small capacity. 
%We say that a function $\varphi$ is continuous outside an open subset $W$ in $U$ if $\varphi$ is continuous on $U \backslash W$ 
Furthermore, in \cite{DinhMarVu}, it was proved that the standard regularization  by convolution of an element $\varphi$ in $W^*(U)$, after extracting a subsequence if necessary, converges pointwise to $\varphi$ outside some pluripolar set (see \cite[Theorem 2.10]{DinhMarVu}). Precisely, let $\mathrm{Leb}$ be the Lebesgue measure on $\C^k$.  If $\varphi \in W^*(U)$ and $\chi$ is a cut-off radial function with compact support in $U$ with $\int_{\C^k} \chi d \mathrm{Leb}=1$ and 
$$\varphi_\epsilon(x):= \epsilon^{-2k}\int_{\C^k} \varphi(x-y) \chi(y/\epsilon) d \mathrm{Leb},$$
then %$(\varphi_{\epsilon})_\epsilon$ is a Cauchy sequence with respect to capacity (see \cite{DinhMarVu} for the definition), and 
there is a subsequence  $(\varphi_{\epsilon_n})_n \subset (\varphi_\epsilon)_\epsilon$  which converges pointwise to a Borel function $\varphi'$ outside some pluripolar set in $U$. The function $\varphi'$ is equal to $\varphi$ in $L^2(U)$ and is called a \emph{representative} of $\varphi$. Two representatives differ only on a pluripolar set. \emph{From now on, when we speak of  a function in $W^*(U)$, we implicitly identify it with one of its representatives.}

%Note that we also have that $\varphi_\epsilon$ converges to $\varphi'$ in capacity as $\epsilon \to 0$, \emph{i.e,} for every constant $\beta>0$ and every compact $K$ in $U$, one has
%$$\limsup_{\epsilon \to 0}\mathrm{Cap}\big(\big\{x \in U: |\varphi_\epsilon(x)- \varphi'(x)| \ge \beta\big\}\cap K, U\big) =0.$$ 

Our main result in this paper strengthens the above-mentioned result by showing that the complement of the set of Lebesgue points of $\varphi$ is indeed pluripolar. In particular, it implies that $\varphi_\epsilon $ converges pointwise to $\varphi$ outside some pluripolar set as $\epsilon \to 0$.

Let  $B(x,r)$ denote the ball of radius $r>0$ with center $x$ in $\C^k$.
A point $x \in U$ is a \emph{Lebesgue point} for a function $ \varphi \in L^1_{loc}(U)$  if one has 
\begin{align}\label{ine-lebes}
\lim_{\epsilon \to 0}  \frac{1}{\mathrm{Leb}(B(0,\epsilon))}\int_{B(0,\epsilon)} |\varphi(x+ y)-\varphi(x)| d \mathrm{Leb}(y)\to 0,
\end{align} 
It is classical that the  convergence  (\ref{ine-lebes}) holds for almost every $x$. In other words, the complement of the set of Lebesgue points of $\varphi$ is of zero Lebesgue measure. 

%The purpose of this note is to show that \eqref{uptoextracting} is actually true when $r\to 0$ so there is no need to take a specific sequence.

\begin{theorem}\label{th:Lebesgues} Let $\varphi \in W^*(U)$ and  let $\varphi'$ be a representative of $\varphi$. Then,
the complement of the set of Lebesgue points of $\varphi'$ is pluripolar. % set $A_\varphi$ so that  for all $x \not \in A_\varphi$, one has 
%	\begin{equation}\label{noextracting}
%		\lim_{\epsilon \to 0}\int_{\C^k} \left| \varphi(x+y) - \varphi(x)\right| d\mu_\epsilon(y)\to 0.
%	\end{equation}    
%	In other words, the complement of the set of strong Lebesgue points of $\varphi$ is pluripolar.
	In particular, $\varphi_\epsilon$ converges pointwise to $\varphi'$ as $\epsilon \to 0$ outside some pluripolar set. 
\end{theorem}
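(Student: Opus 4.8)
The plan is to use the key estimate to reduce the theorem to the classical fact that a subharmonic function is a Lebesgue point at every point where it is finite; the only real extra work will be to make sure that every exceptional set that appears is pluripolar and not merely Lebesgue‑null.

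\emph{Reduction.} Decomposing $\varphi$ into $\operatorname{Re}\varphi,\operatorname{Im}\varphi$ and then each of these into its positive and negative parts, and observing that $d\psi\wedge d^c\psi=\mathbf 1_{\{\psi>0\}}\,d\varphi\wedge d^c\varphi\le T$ whenever $\psi$ is one of these four functions, I would reduce to the case $\varphi=\psi\in W^*(U)$ with $\psi\ge0$: a point is a Lebesgue point of a fixed representative of $\varphi$ as soon as it is a Lebesgue point of the corresponding representatives of all four pieces. Fix $\alpha\in(1,2)$. By the key estimate $\psi^\alpha$ is locally bounded above by a plurisubharmonic function; I would use it in the form that on every ball $B\Subset U$ one may write $\psi^\alpha=a_B-b_B$ with $a_B,b_B$ plurisubharmonic on $B$. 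Gluing these (two plurisubharmonic functions equal a.e.\ agree everywhere, so the $a_B-b_B$ agree on overlaps wherever finite) produces a function $g$ on $U$, locally a difference of plurisubharmonic functions, with $g=\psi^\alpha$ a.e.; by the sub‑mean value inequality together with $\psi^\alpha\ge0$ a.e.\ one gets $g\ge0$ at every point where $g$ is finite, so $\psi_0:=g^{1/\alpha}$ is a representative of $\psi$.

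\emph{The Lebesgue‑point argument.} I would use: if $v$ is subharmonic near $x_0$ and $v(x_0)>-\infty$, then $x_0$ is a Lebesgue point of $v$. This is immediate: given $\delta>0$, upper semicontinuity gives $(v-v(x_0))^+\le\delta$ on a small ball $B(x_0,r)$, while the sub‑mean value inequality gives $\frac1{\operatorname{Leb}(B(0,r))}\int_{B(x_0,r)}(v-v(x_0))\ge0$; subtracting, $\frac1{\operatorname{Leb}(B(0,r))}\int_{B(x_0,r)}|v-v(x_0)|\le2\delta$. Applied to $a_B$ and $b_B$, every $x_0\in B$ outside the pluripolar set $\{a_B=-\infty\}\cup\{b_B=-\infty\}$ is a Lebesgue point of $g=a_B-b_B$, with Lebesgue value $g(x_0)\ge0$. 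To pass from $g$ to $\psi_0=g^{1/\alpha}$ I would use that $s\mapsto s^{1/\alpha}$ is concave on $[0,\infty)$ (as $\alpha\ge1$), so that $|s^{1/\alpha}-t^{1/\alpha}|\le|s-t|^{1/\alpha}$ for $s,t\ge0$ and, by Jensen,
\[
\frac1{\operatorname{Leb}(B(0,r))}\int_{B(x_0,r)}|\psi_0-\psi_0(x_0)|\ \le\ \Big(\frac1{\operatorname{Leb}(B(0,r))}\int_{B(x_0,r)}|g-g(x_0)|\Big)^{1/\alpha}\ \longrightarrow\ 0 ,
\]
so such an $x_0$ is also a Lebesgue point of $\psi_0$.

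\emph{Globalization and conclusion.} Covering $U$ by countably many balls $B_j\Subset U$, the complement of the set of Lebesgue points of $\psi_0$ is contained in $\bigcup_j\big(\{a_{B_j}=-\infty\}\cup\{b_{B_j}=-\infty\}\big)$, a countable union of pluripolar sets, hence pluripolar. Since any two representatives of $\psi$ coincide off a pluripolar set (they are $L^2$‑equal and quasi‑continuous, cf.\ \cite{VignyW,DinhMarVu}), the same holds for the given representative $\psi'$; taking the union over the four pieces yields the first assertion for $\varphi'$. For the last statement, at a Lebesgue point $x_0$ of $\varphi'$ one has, since $\chi$ is bounded and supported in a ball of radius $O(\epsilon)$,
\[
|\varphi_\epsilon(x_0)-\varphi'(x_0)|\ \le\ C\,\frac1{\operatorname{Leb}(B(0,\epsilon))}\int_{B(x_0,\epsilon)}|\varphi'(y)-\varphi'(x_0)|\,d\operatorname{Leb}(y)\ \longrightarrow\ 0 ,
\]
so $\varphi_\epsilon\to\varphi'$ off the pluripolar complement of the Lebesgue points. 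Granting the key estimate, the only genuinely delicate point in this scheme is the bookkeeping: one must verify that each discarded set — where $\psi^\alpha$ fails to equal its difference‑of‑psh version $g$, where distinct representatives disagree, where $g=-\infty$ — is pluripolar rather than merely Lebesgue‑null, which is exactly where the quasi‑continuity of elements of $W^*(U)$ and the stability of pluripolarity under countable unions are used; the true heart of the matter, of course, is producing the key estimate itself.
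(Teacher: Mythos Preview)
The decisive gap is the claim that Theorem~\ref{tm:main?} lets you write $\psi^\alpha=a_B-b_B$ with $a_B,b_B$ plurisubharmonic on $B$. The theorem gives only the one-sided bound $\psi^\alpha\le -u$ for some psh $u$ (so $\psi^\alpha$ is bounded above by a \emph{plurisuperharmonic} function, not a plurisubharmonic one). Being sandwiched as $0\le\psi^\alpha\le -u$ does not force $\psi^\alpha$ into the $\delta$-psh class: for that one would need $dd^c(\psi^\alpha)$ to be a current of order zero, and nothing in the hypothesis $\psi\in W^*(U)$ ensures this --- indeed $\psi^\alpha$ need not even lie in $W^{1,2}_{loc}$ when $\psi$ is unbounded, since $|\nabla\psi^\alpha|^2\sim\psi^{2\alpha-2}|\nabla\psi|^2$ can fail to be locally integrable. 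Once this decomposition is gone, your Lebesgue-point argument for $g$ and the subsequent H\"older/Jensen transfer to $g^{1/\alpha}$ have nothing to work with.

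The paper uses Theorem~\ref{tm:main?} quite differently, and in fact only with $\alpha=1$, to obtain $|\varphi|\le -u$. The second, equally essential, ingredient is the plurifine continuity of $\varphi$ outside a pluripolar set $E$ (Proposition~\ref{pro-plurifinetopoW*}, a consequence of quasi-continuity). For $x_0\notin E$ lying in a plurifinely open neighbourhood $V$ on which $\varphi$ is continuous, the average $c_r^{-1}\int_{B(x_0,r)}|\varphi-\varphi(x_0)|$ is split: the part over $B(x_0,r)\cap V$ is small by plurifine continuity, while the part over $B(x_0,r)\setminus V$ is dominated by $c_r^{-1}\int_{B(x_0,r)\setminus V}(|u|+|u(x_0)|)$ and shown to vanish via a density lemma for psh functions relative to plurifinely open sets (Proposition~\ref{pro-plurifinetopo3}). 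In short, the psh bound controls the tail on the complement of $V$, plurifine continuity controls the bulk on $V$; your scheme tried to make the psh bound do both jobs, which it cannot.
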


 Let $X$ be a complex manifold. The notion of Lebesgue points extends to points in $X$. 
We define $W^*_{loc}(X)$ to be the subset of $L^2_{loc}(X)$ consisting of functions $\varphi$ so that for every $x \in X$, there exists a local chart $U$ around $x$ satisfying that $\varphi \in W^*(U)$.  As a direct consequence of  Theorem  \ref{th:Lebesgues}, one obtains

\begin{corollary} \label{cor-localW*}  Let $X$ be a complex manifold. Let $\varphi \in W^*_{loc}(X)$ and let $\varphi'$ be a representative of $\varphi$. Then the complement of the set of Lebesgue points of $\varphi'$ is locally pluripolar.
\end{corollary}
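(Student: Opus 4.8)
The plan is to localize and reduce directly to Theorem~\ref{th:Lebesgues}, since the corollary is purely local in nature. Recall that a set $A\subset X$ is locally pluripolar exactly when every point of $X$ has a neighborhood $V$ with $A\cap V$ pluripolar in $V$; so it suffices to show that each $x\in X$ has a coordinate neighborhood inside which the complement of the set of Lebesgue points of $\varphi'$ is pluripolar. Fix $x$. By the definition of $W^*_{loc}(X)$ there is a local chart $U$ about $x$, which after shrinking we may take to be identified (via the chart map $\psi$) with a bounded open subset of $\C^k$, and with $\varphi\in W^*(U)$; note that $W^*$ passes to smaller open sets, since restricting the current $T$ in \eqref{bound_by_current} only decreases its mass.

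Next I would apply Theorem~\ref{th:Lebesgues} on $U$. The one compatibility point to dispatch is that $\varphi'$ was fixed once and for all on all of $X$, so I need $\varphi'|_U$ to be a representative of $\varphi|_U$ in the sense used in that theorem. This holds because the mollifications $\varphi_\epsilon$ are locally determined: for each fixed $y\in U$ and every $\epsilon<\mathrm{dist}(y,\partial U)$, the mollification computed on $X$ and the one computed on $U$ agree at $y$, so along the subsequence $(\epsilon_n)$ realizing $\varphi'$ the two pointwise limits coincide on $U$ outside the pluripolar set cut out on $U$. Hence Theorem~\ref{th:Lebesgues} gives that the complement of the set of Lebesgue points of $\varphi'|_U$ is pluripolar in $U$. (Even if one worries about the choice of representative, the conclusion is insensitive to it: changing a function on a pluripolar set $P$ enlarges the set of non-Lebesgue points by at most $P$, because the average in \eqref{ine-lebes} is unaffected by modifying the integrand on a null set and only the anchor value matters.)

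Finally I would transport this back to $X$. A holomorphic coordinate change is a $C^\infty$ diffeomorphism whose Jacobian is bounded above and below on relatively compact sets, hence it maps Lebesgue points to Lebesgue points in both directions; and being holomorphic it pulls psh functions back to psh functions, hence pluripolar sets to pluripolar sets. Therefore the pluripolar exceptional set produced inside $U$ is, via $\psi$, precisely the exceptional set of $\varphi'$ in a neighborhood of $x$, and taking the union over $x\in X$ we conclude that the complement of the set of Lebesgue points of $\varphi'$ is locally pluripolar. I do not expect a genuine obstacle: everything beyond Theorem~\ref{th:Lebesgues} is the invariance bookkeeping above, and the subtlest (still routine) point is checking that ``Lebesgue point'' and ``pluripolar'' are independent of the chart.
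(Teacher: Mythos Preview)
Your proposal is correct and matches the paper's approach: the paper gives no separate proof and simply states the corollary as a direct consequence of Theorem~\ref{th:Lebesgues}. Your spelling-out of the invariance bookkeeping (Lebesgue points and pluripolar sets transfer along biholomorphic charts, and the conclusion is insensitive to the choice of representative up to a pluripolar set) is precisely what is tacitly meant; the only cosmetic wrinkle is that ``the mollification computed on $X$'' is not defined in the paper, but your parenthetical remark about representatives already disposes of this.
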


Endow $X$ with a smooth Riemannian metric. Let $B_X(x,r)$ be the ball of radius $r$ centered at $x \in X$.  Let $\mu_0$ be a smooth volume form on $X$.
By Corollary  \ref{cor-localW*}, for every $\varphi \in W^*_{loc}(X)$,  we define a Borel function $\tilde{\varphi}: X \to \R$ as follows:  
$$\tilde{\varphi}(x):= \lim_{\epsilon \to 0} \frac{1}{\mu_0(B_X(x,\epsilon))} \int_{B_X(x,\epsilon)} \varphi d\mu_0$$
if the limit exists (and is a finite number), otherwise we simply put $\tilde{\varphi}(x):=0$. One sees that $\tilde{\varphi}(x)= \varphi(x)$ if $x$ is a Lebesgue point of $\varphi$, and $\tilde{\varphi}$ is a representative of $\varphi$. We call  $\tilde{\varphi}$  \emph{a canonical representative} of  $\varphi$.    In practice we don't distinguish $\tilde{\varphi}$ with $\varphi$.   It is useful to recall at this point that if $X$ is compact, then every locally pluripolar set on $X$ is actually (globally) pluripolar set; see \cite{DS_tm,GZ,Vu_pluripolar}.

Let us now comment on the proof of Theorem \ref{th:Lebesgues}. As explained in \cite[Corollary 25]{VignyW}, because of the continuity outside sets of arbitrarily small capacity and using the plurifine topology, the desired assertion of Theorem \ref{th:Lebesgues} would be a consequence of the fact that $\varphi$ can be controlled, locally, by a plurisubharmonic function in the sense that 
\begin{equation}\label{usual_bound}
\forall V\Subset U, \ \exists u \ \mathrm{psh \ on \  V}, u \leq \varphi\leq -u \, \text{ on }\, V.
\end{equation}
This fact is well-known in the dimension one. Let us recall briefly arguments for a proof of \eqref{usual_bound}  when $k=1$ (hence $W^*(U)=W^{1,2}(U)$). Assume that $\varphi$ is non-positive with 
compact support in $U$ (one can always reduce to that case) so that we can actually work in $W_0^{1,2}(U)$ endowed with the norm $\|\varphi\|^2 := \int_U  d \varphi \wedge d^c\varphi $. Consider
\[  C_\varphi:= \{ \phi \in W^*(U), \phi \leq \varphi \leq 0 \}.\]
Then  $C_\varphi$ is a closed convex set in $W_0^{1,2}(U)$ so it admits a minimal element by Riesz projection theorem: a 
function $\phi$ such that $\int_U   d \phi \wedge d^c \phi $ is minimal 
in $C_\varphi$. Then, $\phi$ is subharmonic because for every negative test function 
$\theta$ with compact support in $U$ and $t>0$, we have that  $\phi + t \theta \in C_\varphi$. Hence
$$\int_U d (\phi + t \theta) \wedge d^c (\phi + t \theta) \geq \int_U d 
\phi \wedge d^c \phi.$$
 Using this and  Stokes' formula and letting $t \to 0$ implies $\int_U \theta dd^c \phi  \leq 0$. It follows that $dd^c \phi$ is  a positive measure, i.e, $\phi$ is subharmonic.

When $k>1$, it is not clear how to generalize such a strategy because $W^*(U)$ is not a Hilbert space anymore (it is not even reflexive \cite[Corollary 8]{VignyW}). The key ingredient in the proof of Theorem \ref{th:Lebesgues} is the following result which is of independent interest.  

\begin{theorem}\label{tm:main?} Let $\B$ be the unit ball in $\C^k$. 
	Let $\varphi \in W^*(\B)$  with $\|\varphi\|_* \le 1$
	and $\alpha \in [1,2)$. Then for every compact $K \subset \B$, there exist  a constant $C>0$ and  a psh function $u$ on $\B$ such that 
\begin{align}\label{ine-chanphiboiumualpha}	
	 |\varphi|^{\alpha }\leq -u
	\end{align}
	on    $K$ possibly outside some pluripolar set and $\|u\|_{L^1(K)} \le C$. % Moreover, for any compact set $L$ with $K\Subset L\subset \B$, there exists a constant $A_{\alpha,L} \geq 0$ such that for all $\varphi \in W^*(\B)$, one can find such a function $u$ with $\|u\|_{L^1,L}\leq A_{\alpha,L}$ 
\end{theorem}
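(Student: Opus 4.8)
The plan is to realize $|\varphi|^\alpha$ as the Laplacian-controlled object it morally is, by working with a regularizing family and passing to an envelope, but on the current side rather than the Sobolev side (since $W^*$ is not reflexive). First I would reduce to the case $\varphi$ smooth: take the convolutions $\varphi_\epsilon$, which satisfy $d\varphi_\epsilon \wedge d^c\varphi_\epsilon \le T_\epsilon$ for a smoothing $T_\epsilon$ of the current $T$ realizing $\|\varphi\|_*$, with uniformly bounded mass on a slightly smaller ball. The heart of the matter is the pointwise inequality: for $\alpha \in [1,2)$ and $\psi$ smooth one computes
\begin{equation*}
dd^c\bigl(|\psi|^\alpha\bigr) = \alpha |\psi|^{\alpha-2}\,\psi\, dd^c\psi + \alpha(\alpha-1)|\psi|^{\alpha-2}\, d\psi\wedge d^c\psi,
\end{equation*}
so $|\psi|^\alpha$ is \emph{not} psh, but the failure is controlled: the first term involves $dd^c\psi$ (which for $\psi\in W^*$ is only a distribution, not a measure — this is precisely the obstruction in higher dimension), while the second term is dominated by $\alpha(\alpha-1)|\psi|^{\alpha-2}\,T$. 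The trick I would use is to absorb the bad first term by adding a large multiple of an auxiliary psh function built from the potential of $T$: since $T = dd^c g$ locally for a psh (hence locally integrable) potential $g$ on $\B$, and since $d\varphi\wedge d^c\varphi \le T$ forces $|\nabla\varphi|^2 \lesssim \Delta g$ in the distributional sense, one expects a bound of the shape $|\varphi|^\alpha \le C(g + h)$ for suitable psh $h$ — the point being that $|\psi|^{\alpha-2}$ with $\alpha<2$ is a \emph{negative} power that stays integrable against the relevant measures.

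Concretely, I would proceed via a Poincaré–Sobolev/Moser-type iteration combined with the current bound: set $w := |\varphi|^{\alpha/2}$, show $w \in W^{1,2}_{loc}$ with $\int |\nabla w|^2 \lesssim \int |\varphi|^{\alpha-2}\, T\wedge\omega^{k-1} < \infty$ (finiteness using $\alpha - 2 > -1$ after a Hölder/layer-cake argument, or rather using that $\varphi\in W^*$ already embeds into suitable $L^p$), and then — this is where the complex structure re-enters — use that $w^2 = |\varphi|^\alpha$ satisfies, as currents, $dd^c(|\varphi|^\alpha) \ge -\alpha|\varphi|^{\alpha-2}\psi\, dd^c\varphi - (\text{something}) \ge -C\,S$ for a positive closed current $S$ of finite mass. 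Once we know $dd^c(|\varphi|^\alpha) \ge -C\, S$ with $S$ positive closed of finite mass on $\B$, writing $S = dd^c G$ for a psh potential $G\le 0$ on a neighborhood of $K$ gives that $|\varphi|^\alpha - C\,G$ is (the restriction of a) psh function $v$; then $u := -v + CG = -|\varphi|^\alpha$... rather, $|\varphi|^\alpha = v - C\,G \le -C\,G =: -u$ on $K$ after adjusting signs, with $u = C\,G$ psh (up to sign conventions — one arranges $G\le 0$ so $u := -|v| $ works), and $\|u\|_{L^1(K)} = C\|G\|_{L^1(K)} \lesssim \|S\| \lesssim \|T\|_* \le 1$. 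The "possibly outside some pluripolar set" in the statement is then forced by the fact that we only control the representative $\varphi'$ up to a pluripolar set, and that the distributional inequality upgrades to a pointwise one only quasi-everywhere.

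The main obstacle I anticipate is making rigorous the passage from the smooth estimate to the limit: one must show that the psh functions $u_\epsilon$ obtained from $\varphi_\epsilon$ (with uniformly bounded $L^1(K)$ norm) subconverge, and that the limit $u$ still dominates $|\varphi'|^\alpha$ off a pluripolar set rather than merely $|\varphi|^\alpha$ in $L^1$ — this requires knowing that $\varphi_{\epsilon_n}\to\varphi'$ outside a pluripolar set (the cited result of Dinh–Marinescu–Vu, \cite[Theorem 2.10]{DinhMarVu}) together with the Hartogs-type lemma for psh functions to transfer the pointwise bound through the limit, using that decreasing/$L^1$-convergent sequences of psh functions have their $\limsup$ bounded by the limit off a pluripolar set. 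A secondary technical point is justifying the distributional Leibniz computation for $|\varphi|^\alpha$ when $\alpha<2$ and $\varphi$ is only in $W^*$ — here one truncates $\varphi$ at levels $\pm N$, applies the smooth identity to $\max(\min(\varphi,N),-N)$ composed with a regularization, and lets $N\to\infty$, using that the negative power $|\varphi|^{\alpha-2}$ is controlled precisely because $d\varphi\wedge d^c\varphi \le T$ keeps $\int |\varphi|^{\alpha-2} d\varphi\wedge d^c\varphi$ finite via the Dinh–Sibony embedding of $W^*$ into $L^p$ for all $p<\infty$.
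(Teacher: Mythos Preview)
Your approach has a genuine gap at its core. The mechanism you propose---establishing $dd^c(|\varphi|^\alpha) \ge -C\,S$ for a closed positive current $S$ whose mass is controlled by $\|\varphi\|_*$---is false. Consider in dimension $k=1$ the family $\varphi_\epsilon(z) := \epsilon \sin(\mathrm{Re}(z)/\epsilon)$ on $\B$: one has $\|\varphi_\epsilon\|_* \lesssim 1$ uniformly (the gradient is bounded by $1$), yet
\[
\Delta(|\varphi_\epsilon|^\alpha) = \epsilon^{\alpha-2}\, g''\bigl(\mathrm{Re}(z)/\epsilon\bigr), \qquad g(t) := |\sin t|^\alpha,
\]
and since $g''$ has nontrivial negative part on each period (e.g.\ $g''(\pi/2)=-\alpha$), the negative part of $\Delta(|\varphi_\epsilon|^\alpha)$ has mass of order $\epsilon^{\alpha-2} \to \infty$. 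Thus no uniform $C$ and $S$ exist (while the theorem itself holds trivially here, since $|\varphi_\epsilon|^\alpha \le \epsilon^\alpha$). The structural reason is that the hypothesis $d\varphi\wedge d^c\varphi \le T$ controls only the gradient energy, not $dd^c\varphi$, and the term $\alpha|\varphi|^{\alpha-1}\,\mathrm{sgn}(\varphi)\,dd^c\varphi$ in your Leibniz identity can be arbitrarily negative. Your sentence ``absorb the bad first term by adding a large multiple of an auxiliary psh function built from the potential of $T$'' is exactly the step that cannot be carried out: the potential of $T$ knows nothing about $dd^c\varphi$. This also dooms the regularization-and-extraction scheme you sketch at the end, since without a uniform bound on the psh correctors no subsequence converges.

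The paper's proof takes a completely different, indirect route. Rather than attempting to make $|\varphi|^\alpha$ quasi-psh, it establishes via iterated integration by parts (Lemmas~\ref{le-estimateJp=0}--\ref{le-estimateI}) energy estimates of the form $\int_K h_n^2 \varphi^{2m}\,dd^c v_1\wedge\cdots\wedge dd^c v_k \lesssim n^m$ uniformly over psh $0\le v_j\le 1$, where $h_n$ is a cutoff built from the potential $\psi$ truncated at level $-n$. These yield (Corollary~\ref{key}) capacity bounds $\mathrm{Cap}(K_n,\B) \lesssim (\lambda/4)^{mn}$ on the super-level sets $K_n = \{\varphi\ge 2^n,\ \psi\ge -\lambda^n\}$ with $2^\alpha<\lambda<4$. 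The psh function is then constructed explicitly as $u = \sum_n 2^{n\alpha}\bigl(u^*_{K_n} + \lambda^{-n}\max(\psi,-\lambda^n)\bigr)$ using the relative extremal functions of the $K_n$; the capacity decay ensures $u\not\equiv -\infty$ via the comparison principle, and the uniform $L^1$ bound is obtained afterward by a separate contradiction argument.
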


We recall again that in (\ref{ine-chanphiboiumualpha}) we implicitly identify $\varphi$ with a representative so that the inequality is meant to hold outside a pluripolar set.
Our proof of Theorem  \ref{tm:main?} relies  on a sort of $L^m$-bound against Monge-Amp\`ere measures of bounded potentials for functions in $W^*(\B)$ and a ``plurisubharmonicify" argument to construct plurisubharmonic functions through a given Borel function.
 Constructions of similar types were used  in pluripotential theory; e.g., see the proof of Josefson's theorem \cite[Theorem 4.7.4]{Klimek}.

Using exponential integrability of psh functions and Theorem \ref{tm:main?}, we see that for every compact $K \Subset \B$ and for every $\alpha \in [1,2)$,  there exist constants $c_1>0,c_2>0$ such that for  every $\varphi \in W^*(\B)$  with $\|\varphi\|_* \le 1$ there holds
$$\int_K e^{c_1 |\varphi|^\alpha} d \mathrm{Leb} \le c_2.$$
This is a weak version of Moser-Trudinger type inequality proved in \cite[Theorem 1.2]{DinhMarVu} where it was even showed that the above inequality does hold for $\alpha=2$ by a different method. 
We don't know if Theorem \ref{tm:main?} holds for $\alpha=2$.  Investigating that case is interesting because if the answer is positive, this gives a new proof of the above-mentioned Moser-Trudinger type inequality. Nevertheless, Theorem \ref{tm:main?} is  no longer true if $\alpha>2$, see Example \ref{ex-alpha2} below.  We also obtain a global version of Theorem \ref{tm:main?}, see Theorem \ref{th-tm:main?global} below for details. 

In the next section we prove Theorem \ref{tm:main?}. The proof of Theorem \ref{th:Lebesgues} is given in Section \ref{sec-mainthoerporof}. 
\\
% We refer to the end of Section \ref{sec-pluribound} for discussions on the case where $\alpha=2$ and the relation of Theorem \ref{tm:main?} to the Moser-Trudinger inequality obtained in \cite{DinhMarVu}.  

\noindent
\textbf{Acknowledgment.} We thank the anonymous referee for a very careful reading and many suggestions  which improved considerably the presentation of the paper.

\section{Plurisubharmonic upper bound for functions in $W^*(\B)$} \label{sec-pluribound}

In this section we prove Theorem \ref{tm:main?}. \emph{We first do it for $\varphi \in W^*(\B)$ non-negative, and the general case will be treated at the end of this section.} Let $\varphi \in W^*(\B)$ be a non-negative function with $\|\varphi\|_* \le 1$ and let $\psi$ be a negative psh function on $\B$ so that 
$$ d \varphi \wedge d^c \varphi \le dd^c \psi.$$ % Since the question is local, we can always assume that $\psi \leq 0$. 
By \cite[Theorem 2.10]{DinhMarVu}, we know that there exists a Borel function $\varphi': \B \to \R$ such that $\varphi'= \varphi$ almost everywhere and if $\varphi_\epsilon$ is the standard regularization of $\varphi$ by convolution, then $\varphi_{\epsilon}$ converges to $\varphi'$ in capacity, and there exists a subsequence $(\varphi_{\epsilon_j})_j \subset (\varphi_\epsilon)_\epsilon$ such that $\varphi_{\epsilon_j}$ converges pointwise to $\varphi'$ outside some pluripolar set. Such a  function $\varphi'$ is called a representative of $\varphi.$  We recall also that 
\begin{align}\label{ine-chinhquyhoa}
d \varphi_\epsilon \wedge d^c\varphi_\epsilon \le dd^c \psi_\epsilon,
\end{align}
where $\psi_\epsilon$ is the standard regularization of $\psi$.

\subsection{Energy estimates} 
We note that it was proved in \cite[Proposition 6]{VignyW} that $\varphi$ belongs to BMO space and consequently, by results in \cite{John_nirenberg}, one gets $\int_K e^{c \varphi} \omega^k \le A$ for some positive constants $c, A$ and for every $\varphi$ with $\|\varphi\|_* \le 1$. In this paper, we use the following direct consequence of this exponential estimate (it can also be deduced from the Moser-Trudinger inequality proved in \cite{DinhMarVu}).

\begin{corollary}\label{cor-Lmbound} Let $K \Subset \B$ and $m\in \N$.  There exists a constant $C_1>0$ so that  $$\int_K |\varphi|^m \omega^k \le C_1$$
for every $\varphi \in W^*(\B)$ with $\|\varphi\|_* \le 1$.
\end{corollary}

%Let $K \Subset L$ be two compact subsets in $\B$ and let $\chi$ be a smooth cut-off function which is equal to $1$ in a neighborhood of $L$.  
Let $n \in \N$ and  $\psi_n:= \max\{\psi, -n\}$.  Consider the function
\[ h_n:= 1 + \psi_n/ n \cdot \]
We have that $0\leq h_n\leq 1$ is a psh function with $ h_n=0$ on $\{\psi \leq  -n\}$.  Let $T_n:= dd^c h_n^2/2$, it is a positive closed $(1,1)$-current. Observe that $T_n$ vanishes on the open set $\{ \psi <  -n\}$. 

\begin{lemma}\label{le-obser} Assume that $\varphi$ and $\psi$ are smooth. The following inequalities hold:

(i) $d h_n \wedge d^c h_n \leq  T_n$ and $ h_n  dd^c h_n \leq  T_n$

(ii) $ d \varphi \wedge d^c \varphi \leq   dd^c \psi_n$
on $\{h_n >0\}$,

(iii) $ d \varphi \wedge d^c \varphi \wedge T_n \leq   dd^c \psi_{n+1} \wedge T_n.$
%and$$ \quad  i \partial \varphi \wedge \bar \partial \varphi \wedge T_n \leq  dd^c \psi_n \wedge T_n.$$
\end{lemma}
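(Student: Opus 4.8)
The plan is to derive all three inequalities by unwinding the definitions of $\psi_n$, $h_n$, $T_n$, combining the Leibniz rule with a localization near $\supp T_n$. I will freely use that $\psi_n$, $\psi_{n+1}$ and $h_n$ are bounded psh functions (so $h_n$ is in particular Lipschitz, with $0\le h_n\le 1$), which makes all the wedge products below well defined; alternatively, since the lemma is only needed with $\varphi,\psi$ smooth, one may replace the maxima by smoothed maxima and work with genuinely smooth forms, which changes nothing in what follows. For (i): apply the Leibniz rule to get $T_n = dd^c(h_n^2/2) = d(h_n\, d^c h_n) = dh_n\wedge d^c h_n + h_n\, dd^c h_n$. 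Now $dh_n\wedge d^c h_n$ is a positive $(1,1)$-form (up to a positive constant it equals $i\,\partial h_n\wedge\bar\partial h_n$), and $h_n\, dd^c h_n$ is a positive current because $h_n\ge 0$ and $h_n=1+\psi_n/n$ is psh, so $dd^c h_n = \tfrac1n dd^c\psi_n\ge 0$. Hence $T_n$ is a sum of two positive currents, and each of them is therefore $\le T_n$.

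For (ii): note $\{h_n>0\}=\{\psi_n>-n\}=\{\psi>-n\}$, and on this open set $\psi_n=\psi$, so $dd^c\psi_n=dd^c\psi$ there; combining with the hypothesis $d\varphi\wedge d^c\varphi\le dd^c\psi$ on $\B$ gives (ii). For (iii): the key observation is that $h_n\equiv 0$ on the open set $\{\psi<-n\}$, hence $T_n=dd^c(h_n^2/2)=0$ there, so $\supp T_n\subset\{\psi\ge -n\}$. Since $\{\psi\ge -n\}\subset\{\psi>-(n+1)\}$ and the latter set is open, $\psi_{n+1}=\psi$ on a neighborhood of $\supp T_n$; therefore $dd^c(\psi_{n+1}-\psi)\wedge T_n=0$, i.e.\ $dd^c\psi_{n+1}\wedge T_n=dd^c\psi\wedge T_n$. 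Finally, $dd^c\psi-d\varphi\wedge d^c\varphi$ is a positive (smooth, as $\varphi,\psi$ are smooth) $(1,1)$-form on $\B$, and the wedge of a positive $(1,1)$-form with the positive current $T_n$ is a positive current, so $(dd^c\psi-d\varphi\wedge d^c\varphi)\wedge T_n\ge 0$; rearranging and using the previous identity yields $d\varphi\wedge d^c\varphi\wedge T_n\le dd^c\psi_{n+1}\wedge T_n$.

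This is essentially a bookkeeping of signs, so I do not expect a serious obstacle. The one point that must be handled with care is the shift from level $n$ to level $n+1$ in (iii): one has to truncate $\psi$ at the strictly larger level $n+1$ precisely so that the set $\{\psi>-(n+1)\}$, an open set on which $\psi$ is left untouched, contains the closed set $\{\psi\ge -n\}\supseteq\supp T_n$; the smaller open set $\{\psi>-n\}$ would fail to do so because of the corner locus $\{\psi=-n\}$ where $\psi_n$ has a kink. A secondary, purely technical point is the limited regularity of $h_n$ and $\psi_n$ coming from the maximum, which is dealt with either by Bedford--Taylor intersection theory for bounded psh potentials or by smoothing the maxima from the start.
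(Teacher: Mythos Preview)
Your proof is correct and follows essentially the same approach as the paper: the Leibniz identity $T_n = dh_n\wedge d^c h_n + h_n\,dd^c h_n$ for (i), the equality $\psi_n=\psi$ on $\{\psi>-n\}$ for (ii), and the vanishing of $T_n$ on $\{\psi<-n\}$ for (iii). The only cosmetic difference is in the organization of (iii): the paper splits the ball into $\{h_{n+1}>0\}$ and $\{h_{n+1}=0\}$ and invokes (ii) at level $n+1$ on the first piece, whereas you phrase the same localization as a support argument ($\supp T_n\subset\{\psi\ge -n\}\subset\{\psi>-(n+1)\}$, where $\psi_{n+1}=\psi$) and then use the global hypothesis $d\varphi\wedge d^c\varphi\le dd^c\psi$ directly; these are the same computation.
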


\proof  Direct computations show $T_n= h_n dd^c h_n+ d h_n \wedge d^c h_n$. Hence, we get 
$$T_n \ge d h_n \wedge d^c h_n, \quad T_n \ge h_n dd^c h_n $$
 because $h_n$ is a non-negative psh function. Thus (i) follows.

 Since $\psi$ is continuous, we have $dd^c \psi_n= dd^c \psi$ on the open set $\{\psi > -n\}= \{h_n> 0\}$. Hence (ii) follows. 
 
  We now prove (iii).  Observe that $d \varphi \wedge d^c \varphi \wedge T_n=0$ on $\{h_{n+1}=0\}= \{\psi \le -n-1\}$ because $T_n=0$ on the open set $\{\psi < -n\}$. By (ii),
$$d \varphi \wedge d^c \varphi \wedge T_n \le dd^c \psi_{n+1} \wedge T_n$$
on $\{h_{n+1}>0\}$. Consequently (iii) follows.  
\endproof

 For every $m\in \N$,   $K \Subset \B$, for $0 \le p \le k$, we define 
$$I_{n,m,p,K}:= \sup_{v_1,\ldots, v_p} \int_{K} h_n^2 \varphi^{2m}  dd^c v_1 
\wedge \cdots \wedge dd^c v_p \wedge \omega^{k-p}$$
and for $0 \le p \le k-1$,
$$J_{n,m,p,K}:= \sup_{v_1,\ldots, v_p} \int_{K} \varphi^{2m}  dd^c v_1 
\wedge \cdots \wedge dd^c v_{p} \wedge T_n \wedge \omega^{k-1-p},$$
where the supremum  in the  two definitions above is taken over all psh functions $v_j$ on $\B$ with $0 \le v_j \le 1$. Note that in the definitions of $I_{n,m,p,K}$ and $J_{n,m,p,K}$, we have fixed a representative of $\varphi$. % given in \cite{DinhMarVu}. 
These definitions are independent of choices of representatives.  The quantity $I_{n,m,p,K}$ can be considered as a sort of energy for $\varphi$.

 In what follows, we use $\lesssim$ or $\gtrsim$ to denote $\le$ or $\ge$ respectively modulo a multiplicative constant independent of $n$ and $\varphi$.
  
\begin{lemma} \label{le-estimateJp=0} Let $\varphi \in W^*(\B)$ be a non-negative function with $\|\varphi\|_* \le 1$ as above.  Then, there exists a constant $c= c_{m,K}$ independent of  $n$ and $\varphi$, such that 	
 $$J_{n,m,0,K} \leq c n^m$$
 for every $n$. 
\end{lemma}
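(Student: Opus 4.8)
The plan is to reduce to smooth data and then induct on the power of $\varphi$. Write $g_n:=h_n^2/2$, so that $T_n=dd^cg_n$ with $0\le g_n\le\tfrac12$. Since the standard regularizations satisfy $\psi_\epsilon\downarrow\psi$ and $d\varphi_\epsilon\wedge d^c\varphi_\epsilon\le dd^c\psi_\epsilon$ (see \eqref{ine-chinhquyhoa}), the potentials $g_{n,\epsilon}$ decrease to $g_n$ (hence $T_{n,\epsilon}\wedge\omega^{k-1}\to T_n\wedge\omega^{k-1}$) while $\varphi_\epsilon\to\varphi'$ in capacity, so it suffices to prove the estimate for smooth $\varphi,\psi$ and then pass to the limit. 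For smooth data I will prove the sharper claim: for every $p\in\N$ and every $L\Subset\B$ there is $c_{p,L}>0$, independent of $n\ge1$ and of $\varphi\ge0$ with $\|\varphi\|_*\le1$, so that
$$\int_L\varphi^{p}\,T_n\wedge\omega^{k-1}\le c_{p,L}\,(1+n^{p/2}).$$
Since $J_{n,m,0,K}=\int_K\varphi^{2m}T_n\wedge\omega^{k-1}$, the case $p=2m$, $L=K$ yields Lemma \ref{le-estimateJp=0} (take $c_{m,K}=2c_{2m,K}$ and use $n\ge1$). For $p=0$ the bound is the Chern--Levine--Nirenberg inequality for the bounded psh potential $g_n$ of $T_n$.

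For the inductive step, fix $L$, choose $L\Subset L^+\Subset\B$ and a cut-off $\chi$ with $0\le\chi\le1$, $\chi\equiv1$ near $L$, $\supp\chi\subset L^+$. Using $d^cg_n=h_n\,d^ch_n$ and integrating by parts (legitimate since $\chi\varphi^p$ has compact support and $\omega$ is $dd^c$-closed),
$$\int_L\varphi^{p}T_n\wedge\omega^{k-1}\le\int_{\B}\chi\varphi^{p}\,dd^cg_n\wedge\omega^{k-1}=-p\int_{\B}\chi\varphi^{p-1}h_n\,d\varphi\wedge d^ch_n\wedge\omega^{k-1}-\int_{\B}\varphi^{p}h_n\,d\chi\wedge d^ch_n\wedge\omega^{k-1}=:\tau+\sigma.$$
To bound $\tau$, apply the Cauchy--Schwarz inequality $\big|\int w\,da\wedge d^cb\wedge\omega^{k-1}\big|\le\big(\int w\,da\wedge d^ca\wedge\omega^{k-1}\big)^{1/2}\big(\int w\,db\wedge d^cb\wedge\omega^{k-1}\big)^{1/2}$ (valid for $w\ge0$) with $w=p\chi\varphi^{p-1}h_n$, $a=\varphi$, $b=h_n$. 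By Lemma \ref{le-obser}, on $\{h_n>0\}$ one has $h_n\,d\varphi\wedge d^c\varphi\le h_n\,dd^c\psi_n=n\,h_n\,dd^ch_n\le nT_n$ and $dh_n\wedge d^ch_n\le T_n$, while both integrands vanish where $h_n=0$; hence $|\tau|\le p\,n^{1/2}\int_{L^+}\varphi^{p-1}T_n\wedge\omega^{k-1}$, and the induction hypothesis for the exponent $p-1$ on $L^+$ bounds this by $p\,n^{1/2}c_{p-1,L^+}(1+n^{(p-1)/2})\lesssim_{p,L}n^{p/2}$. Running the induction on the exponent (and not on $m$ alone) is the point here: it is exactly what lets the factor $\int\varphi^{p-1}T_n\wedge\omega^{k-1}$ be absorbed with the correct power of $n$.

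To bound $\sigma$, note $d^ch_n=n^{-1}d^c\psi_n$ and $0\le h_n\le1$, so the pointwise Cauchy--Schwarz for $(1,1)$-forms followed by the Cauchy--Schwarz inequality against $\omega^k$ give
$$|\sigma|\le\frac1n\Big(\int_{L^+}\varphi^{2p}\,d\chi\wedge d^c\chi\wedge\omega^{k-1}\Big)^{1/2}\Big(\int_{L^+}d\psi_n\wedge d^c\psi_n\wedge\omega^{k-1}\Big)^{1/2}.$$
By Corollary \ref{cor-Lmbound} (exponent $2p\in\N$) the first factor is $O(1)$; the elementary energy estimate $\int_{L^+}du\wedge d^cu\wedge\omega^{k-1}\lesssim_{L^+}\|u\|_{L^\infty(\B)}^2$ for a bounded psh function $u$ (one integration by parts plus Cauchy--Schwarz), applied to $u=\psi_n$ with $\|\psi_n\|_{L^\infty(\B)}\le n$, bounds the second factor by $\lesssim_{L^+}n$, so that $|\sigma|=O(1)$ uniformly in $n$ and $\varphi$. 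Adding the two estimates, $\int_L\varphi^{p}T_n\wedge\omega^{k-1}\le|\tau|+|\sigma|\le c_{p,L}(1+n^{p/2})$, which closes the induction.

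The part I expect to require the most care is the reduction from a general $\varphi$ to a smooth one: one must pass to the limit in $\int_K\varphi_\epsilon^{2m}T_{n,\epsilon}\wedge\omega^{k-1}$, where the measures converge only weakly and $\varphi_\epsilon^{2m}$ is unbounded. This is handled in the usual way — truncate $\varphi_\epsilon^{2m}$ at a level $M$, use weak convergence of $T_{n,\epsilon}\wedge\omega^{k-1}$ together with convergence in capacity (and the fact that $T_n\wedge\omega^{k-1}$ charges no pluripolar set) to take $\epsilon\to0$ for the truncated functions, then let $M\to\infty$ by monotone convergence. A second, elementary but essential, point is the one already flagged: a naive induction on $m$ does not close, since $\tau$ would then only be dominated by $n^{1/2}\int_{L^+}\varphi^{2m}T_n\wedge\omega^{k-1}$, which is linear in the quantity being estimated with an unbounded coefficient; replacing $m$ by the exponent as the induction variable removes this obstruction.
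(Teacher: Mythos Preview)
Your proof is correct but follows a somewhat different route from the paper's.

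The paper uses a squared cut-off $\chi^2$ and inducts directly on $m$: after integration by parts, the main term $A_2=2m\int\chi^2\varphi^{2m-1}h_n\,d\varphi\wedge d^ch_n\wedge\omega^{k-1}$ is estimated via Cauchy--Schwarz with the splitting $(\chi\varphi^{m}h_n\,d\varphi)\wedge(\chi\varphi^{m-1}\,d^ch_n)$, which yields $A_2^2\lesssim nJ_{n,m,0,\chi}J_{n,m-1,0,\chi}$ --- one factor is exactly the quantity being estimated and the other has lower $m$. Together with $A_1^2\lesssim J_{n,m,0,\chi}$ (from Corollary~\ref{cor-Lmbound}) this gives the clean recursion $J_{n,m,0,\chi}\lesssim 1+nJ_{n,m-1,0,\chi}$, all with the \emph{same} cut-off.

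You instead use the simpler Cauchy--Schwarz with common weight $w=\chi\varphi^{p-1}h_n$, which forces you to induct on the full integer exponent $p$ and to enlarge the compact set at each step ($L\Subset L^+\Subset L^{++}\Subset\cdots$). Your treatment of the boundary term $\sigma$ --- rewriting $d^ch_n=n^{-1}d^c\psi_n$ and invoking the energy bound $\int_{L^+}d\psi_n\wedge d^c\psi_n\wedge\omega^{k-1}\lesssim_{L^+}n^2$ --- is also different from the paper's handling of $A_1$. All of this works.

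One remark: your comment that ``a naive induction on $m$ does not close'' is true for \emph{your} Cauchy--Schwarz splitting but not in general --- the paper's splitting does close the induction on $m$ directly, which buys the convenience of a single fixed cut-off. Conversely, your approach is conceptually more straightforward (no clever pairing) and gives the marginally sharper intermediate statement for all integer powers $p$. For the reduction to smooth data, the paper proceeds slightly differently than you suggest: it first replaces $\varphi$ by $\min\{\varphi,M\}$ (still in $W^*(\B)$ with norm $\le1$), so that $\varphi$ is bounded, and only then regularizes; passing $\epsilon\to0$ is then by dominated convergence, and $M\to\infty$ by monotone convergence --- somewhat cleaner than truncating $\varphi_\epsilon^{2m}$ after the fact.
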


\proof Recall that for every constant $M \ge 0$, the function $\min\{\varphi, M\}$ also belongs to $W^*(\B)$ with $\|\min\{\varphi, M\}\|_* \le 1$, and $$d \min(\varphi, M) \wedge d^c \min(\varphi, M) \le dd^c \psi.$$
Thus by considering $\min\{\varphi, M\}$ instead of $\varphi$ (and using Lebesgue's monotone convergence theorem with $M \to \infty$), we can assume that $\varphi$ is bounded. By standard regularization and Lebesgue's dominated convergence theorem, we can further assume that $\varphi$ and $\psi$ are smooth (see (\ref{ine-chinhquyhoa})).

Let $\chi$ be a smooth cut-off function such that $\chi \equiv 1$ on an open neighborhood of $K$, and $\chi$ is compactly supported on $\B$ with $0 \le \chi \le 1$.  In order to prove the desired assertion, it suffices to bound from above
$$J_{n,m,0,\chi}:=\int_\B \chi^2  \varphi^{2m}  T_n \wedge \omega^{k-1}.$$
Since $T_n= dd^c h_n^2/2$, Stokes' formula gives
\begin{align*}
J_{n,m,0,\chi} &= \int_\B \varphi^{2m} \chi   d \chi \wedge d^c h^2_n \wedge \omega^{k-1}+  m \int_\B \chi^2 \varphi^{2m-1}  d \varphi \wedge d^c h^2_n \wedge \omega^{k-1}\\
&= 2 \int_\B \varphi^{2m} \chi  h_n d \chi \wedge d^c h_n \wedge \omega^{k-1}+  2m \int_\B \chi^2 \varphi^{2m-1} h_n d \varphi \wedge d^c h_n \wedge \omega^{k-1}.
\end{align*}
Let $A_1, A_2$ be the first and second terms in the right-hand side of the last equality respectively. Since $\chi$ is smooth, we obtain $d \chi \wedge d^c \chi \lesssim \omega$.  By this, Cauchy-Schwarz inequality applied to $d \chi \wedge (\chi d^c h_n)$ and Lemma \ref{le-obser}  one gets
\begin{align*}
A_1^2 &\lesssim \int_\B \chi^2 \varphi^{2m } d h_n \wedge d^c h_n \wedge \omega^{k-1} \int_\B \varphi^{2m} d \chi \wedge d^c \chi \wedge \omega^{k-1}\\
& \lesssim  \int_\B \chi^2 \varphi^{2m } T_n \wedge \omega^{k-1} \int_{\supp \chi} \varphi^{2m} \omega^{k}\\
& \lesssim  J_{n,m,0,\chi}
\end{align*}
by  Corollary \ref{cor-Lmbound}. We treat $A_2$ similarly. We have
\begin{align*}
A_2^2 &\lesssim  \int_\B \chi^2 \varphi^{2m} h_n^2 d \varphi \wedge d^c \varphi \wedge \omega^{k-1} \int_\B \chi^2 \varphi^{2m-2} d h_n \wedge d^c h_n \wedge \omega^{k-1}\\
&\lesssim  \int_\B \chi^2 \varphi^{2m} h_n^2 dd^c \psi_n \wedge \omega^{k-1} \int_\B \chi^2 \varphi^{2m-2} T_n \wedge \omega^{k-1}\\
&\lesssim  n J_{n,m-1,0,\chi} J_{n,m,0, \chi}
\end{align*}
because of Lemma \ref{le-obser} again and $h_n^2 dd^c \psi_n=  n h_n^2 dd^c h_n \leq n h_n T_n \leq nT_n$. Consequently, we get
$$J_{n,m,0,\chi} \le C (J_{n,m,0,\chi})^{1/2}+ C(n J_{n,m-1,0,\chi} J_{n,m,0,\chi})^{1/2}$$
for some constant $C$ independent of $n,\varphi$. We infer
$$J_{n,m,0,\chi} \le C^2(1+ n J_{n,m-1,0,\chi}).$$
Applying the last inequality inductively for $m-1, \ldots, 1$ instead of $m$, we obtain 
$$J_{n,m,0,\chi}\lesssim n^m.$$
This finishes the proof.
\endproof

\begin{lemma} \label{le-estimateJ} Let $\varphi \in W^*(\B)$ be a non-negative function with $\|\varphi\|_* \le 1$ as above.  Then, there exists a constant $c= c_{m,K}$ independent of $n$ and $\varphi$, such that 
 $$J_{n,m,p,K} \leq c n^m$$
 for every $n$. 
\end{lemma}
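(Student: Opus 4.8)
The plan is to argue by induction on $p$, the case $p=0$ being exactly Lemma \ref{le-estimateJp=0}. As in the proof of that lemma, a preliminary reduction lets us assume that $\varphi$, $\psi$ and the competitors $v_1,\dots,v_p$ are smooth and that $\varphi$ is bounded: replace $\varphi$ by $\min\{\varphi,M\}$ (still in $W^*(\B)$ with norm $\le 1$ and still satisfying $d\varphi\wedge d^c\varphi\le dd^c\psi$), regularize $\varphi,\psi$ by convolution and each $v_j$ by a decreasing sequence of smooth psh functions, and pass to the limit at the end ($M\to\infty$ by monotone convergence, the regularizations by dominated convergence), all bounds being uniform throughout. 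Fixing a smooth cut-off $\chi$ with $\chi\equiv 1$ near $K$, $0\le\chi\le 1$ and compact support in $\B$, it suffices to bound $J_{n,m,p,\chi}:=\int_\B\chi^2\varphi^{2m}\,dd^c v_1\wedge\cdots\wedge dd^c v_p\wedge T_n\wedge\omega^{k-1-p}$.

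For the inductive step I would run the integration-by-parts/Cauchy--Schwarz scheme of the $p=0$ case, now carrying the extra positive closed factor $\Theta':=dd^c v_1\wedge\cdots\wedge dd^c v_{p-1}$ along. Writing $J_{n,m,p,\chi}=\int_\B\chi^2\varphi^{2m}\,dd^c v_p\wedge\Theta'\wedge T_n\wedge\omega^{k-1-p}$ and using Stokes' formula to move $dd^c v_p$ onto $\chi^2\varphi^{2m}$ (legitimate since $0\le v_p\le 1$), the term in which both derivatives fall on $\chi$ is $\lesssim\int_{\supp\chi}\varphi^{2m}\Theta'\wedge T_n\wedge\omega^{k-p}\le J_{n,m,p-1,\supp\chi}$, hence $\lesssim n^m$ by the induction hypothesis; a further Stokes removes the term carrying $dd^c\varphi$; and every remaining piece contains either $d\varphi\wedge d^c\varphi\wedge T_n$ (bounded via Lemma \ref{le-obser}(iii) together with $h_n^2\,dd^c\psi_n\le nT_n$), or a factor $d\chi\wedge d^c\chi\lesssim\omega$, or a factor $dv_p\wedge d^c v_p\le dd^c(v_p^2/2)$. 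After Cauchy--Schwarz --- distributing the weights $\chi$, $h_n$, $\varphi^{m}$ between the two factors as in the $p=0$ computation, and using that $h_n^2/2$, $h_{n+1}^2/2$, $v_j^2/2$ are themselves admissible potentials bounded by $1$, so that $T_n$, $T_{n+1}$, $dd^c(v_j^2/2)$ can be absorbed into the supremum defining the relevant $J$ (or $I$) --- one is left with a recursion of the shape $J_{n,m,p,\chi}\lesssim n^m+n\,J_{n,m-1,p,\chi}$, where the $n^m$ also absorbs the uniform Chern--Levine--Nirenberg/Bedford--Taylor mass bounds for wedges of $dd^c$'s of uniformly bounded psh functions. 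Iterating in $m$, with base case $J_{n,0,p,\chi}\lesssim 1$ (its integrand is such a wedge), gives $J_{n,m,p,\chi}\lesssim n^m$, and the lemma follows. Depending on how the integration by parts is routed, one Cauchy--Schwarz factor may instead be $I_{n,m,p,\supp\chi}$, in which case one invokes the companion bound $I_{n,m,p,K}\lesssim n^m$, proved in the same spirit by an induction that peels off one $dd^c v_j$.

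The main obstacle is the bookkeeping rather than any isolated estimate. First, the integration-by-parts steps must be ordered so that every factor $d\varphi\wedge d^c\varphi$ produced is already wedged with $T_n$; otherwise Lemma \ref{le-obser} only yields $d\varphi\wedge d^c\varphi\le dd^c\psi$, and the resulting $\int\varphi^{2m'}\,dd^c\psi\wedge(\text{positive closed})$ is not controlled uniformly (recall $\psi$ is unbounded below and we have no bound on $\int d\psi\wedge d^c\psi\wedge\omega^{k-1}$). Second, and more delicately, one must track precisely the powers of $n$ --- each replacement $dd^c\psi_n=n\,dd^c h_n$, and the fact that on $\supp T_n$ one has $h_{n+1}\ge\tfrac1{n+1}$ so that $dd^c\psi_{n+1}\wedge T_n$ is a power of $n$ times $T_{n+1}\wedge T_n$ there, each cost factors of $n$ --- together with the level $p'\le p$ of the auxiliary energy in which each Cauchy--Schwarz factor lands, so that the $n$'s telescope to exactly $n^m$ instead of accumulating. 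Matching the power of $n$ against the level $p'$, which is what forces the systematic absorption of $T_n$, $T_{n+1}$ and $dd^c(v_j^2/2)$ into the supremum without raising $p'$ past $p$, is, I expect, the crux.
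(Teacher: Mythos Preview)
Your outline is essentially the paper's argument: outer induction on $p$ (base case Lemma~\ref{le-estimateJp=0}), inner induction on $m$, Stokes plus Cauchy--Schwarz, and the crucial invocation of Lemma~\ref{le-obser}(iii) so that every $d\varphi\wedge d^c\varphi$ appears already wedged with $T_n$. Two refinements would close the bookkeeping you flag as the crux.

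First, the paper performs only \emph{one} Stokes step (moving $d^c v_1$ onto $\chi^2\varphi^{2m}$) and then applies Cauchy--Schwarz to the resulting cross terms $d\chi\wedge d^c v_1$ and $d\varphi\wedge d^c v_1$. This produces the two pieces $Q_1,Q_2$ with $Q_1^2\lesssim J_{n,m,p-1,\supp\chi}\,J_{n,m,p,\chi}$ and $Q_2^2\lesssim J_{n,m,p,\chi}\int\chi^2\varphi^{2(m-1)}\,d\varphi\wedge d^c\varphi\wedge R$, and no $dd^c\varphi$ term ever arises; your ``further Stokes'' is unnecessary.

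Second, and this is where your power-of-$n$ worry is justified: routing through $T_{n+1}$ costs too much. On $\supp T_n$ one indeed has $h_{n+1}\ge 1/(n+1)$, but this only gives $dd^c\psi_{n+1}\le (n+1)^2 T_{n+1}$ there, hence a factor $n^2$ per step, and the recursion iterates to $n^{2m}$ rather than $n^m$. The paper's fix is simpler: since $0\le h_{n+1}\le 1$ is already psh, use $h_{n+1}$ itself (not $h_{n+1}^2/2$) as an admissible $v_j$. Then
\[
\int_\B \chi^2\varphi^{2(m-1)}\,dd^c\psi_{n+1}\wedge R
=(n+1)\int_\B \chi^2\varphi^{2(m-1)}\,dd^c h_{n+1}\wedge R
\le (n+1)\,J_{n,m-1,p,\chi},
\]
a single factor of $n$. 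Combined with the induction hypothesis on $m$ this gives $Q_2^2\lesssim n^m J_{n,m,p,\chi}$, hence $J_{n,m,p,\chi}^2\lesssim n^m J_{n,m,p,\chi}$, and the lemma follows.

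Finally, no $I$-quantities are needed here: the paper proves Lemma~\ref{le-estimateJ} first, using only $J$'s, and then uses it in the proof of Lemma~\ref{le-estimateI}, so there is no circularity to worry about.
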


\proof We prove the lemma by induction on $p$. If $p=0$, the desired assertion is Lemma \ref{le-estimateJp=0}. We assume now that it is true for all $p'$ with $p' \le p-1$. 

As in the proof of Lemma \ref{le-estimateJp=0}, without loss of generality, we can assume that $\varphi$ and $\psi$ are smooth.   Let $\chi$ be a smooth cut-off function as in the proof of Lemma \ref{le-estimateJp=0}.   In order to prove the desired assertion, it suffices to bound from above
$$J_{n,m,p,\chi}:= \sup_{v_1,\ldots, v_{p}} \int_\B \chi^2  \varphi^{2m}  dd^c v_1 
\wedge \cdots \wedge dd^c v_{p}\wedge T_n \wedge \omega^{k-1-p}.$$
We check that 
$$J_{n,m,p,\chi} \lesssim n^m$$
by induction on $m$ ($p$ now fixed). When $m=0$, this is obvious. Assume that it is true for $m' \le m-1$. Let $R:= dd^c v_2 \wedge\dots \wedge dd^c v_p \wedge T_n \wedge \omega^{k-p-1}$. We note that $R$ depends on $n$. However, to ease the notation, we don't explicitly write the dependence on $n$ here.     By Stokes' formula one gets
\begin{align*}
&\int_{\mathbb{B}} \chi^2 \varphi^{2m}  dd^c v_1 
\wedge R = \\
&	-2\int_{
		\mathbb{B}}  \varphi^{2m} \chi d \chi\wedge   d^c v_1 
	\wedge R-2m\int_{
		\mathbb{B}} \chi^2 \varphi^{2m-1}d\varphi \wedge d^c v_1 
	\wedge R.
\end{align*}
Denote by $Q_1,Q_2$ the first and second terms in the right-hand side of the last equality respectively. By the Cauchy-Schwarz  inequality applied to $d \chi \wedge (\chi d^c v_1)$ and the fact that $\chi$ is smooth with $d \chi\wedge   d^c \chi  \lesssim 1_{\supp \chi } \omega$ and $0 \le v_1 \le 1$, we obtain 
\begin{align} \label{ine-danhgiaQ1}
Q_1^2  &\le 4\int_{
 		\mathbb{B}}  \varphi^{2m} d \chi\wedge   d^c \chi 
	\wedge R    \int_{ 
 		\mathbb{B}} \chi^2 \varphi^{2m} d v_1 \wedge   d^c v_1 
	\wedge R\\
\nonumber	
	& \lesssim \int_{
 		\supp \chi}  \varphi^{2m} 
	R \wedge \omega    \int_{
 		\mathbb{B}}\chi^2 \varphi^{2m} dd^c v_1^2 	\wedge R\\
	\nonumber
	& \lesssim J_{n,m,p,\chi} J_{n,m,p-1,\supp \chi} \lesssim J_{n,m,p,\chi} n^m 
	\end{align}
	by the induction hypothesis on $p$, where in the second inequality of (\ref{ine-danhgiaQ1}), we used the estimate: 
$$dd^c v_1^2= 2 v_1 dd^c v_1+ 2 d v_1\wedge d^c v_1 \ge 2 d v_1 \wedge d^c v_1.
$$	
	 We estimate $Q_2$ similarly. By Cauchy-Schwarz inequality applied to $(\varphi^{m-1} d \varphi) \wedge (\varphi^m d^c v_1)$ and Lemma \ref{le-obser} (iii),
 \begin{align*}
Q_2^2  	& \lesssim    \int_{
 		\mathbb{B}} \chi^2 \varphi^{2m}dv_1\wedge d^c v_1 
 	\wedge R \int_{
 		\mathbb{B}} \chi^2 \varphi^{2(m-1)}d\varphi\wedge d^c \varphi
 	\wedge R \\
& \lesssim J_{n,m,p,\chi} \int_{
 		\mathbb{B}} \chi^2 \varphi^{2(m-1)} dd^c \psi_{n+1} \wedge R\\ 	
 	& \lesssim (n+1) \, J_{n,m,p,\chi} \int_{
 		\mathbb{B}} \chi^2 \varphi^{2(m-1)}  dd^c h_{n+1} \wedge R.
 \end{align*}
By induction hypothesis on $m$, one gets 
 $$\int_{
 	\mathbb{B}} \chi^2 \varphi^{2(m-1)}  dd^c h_{n+1} \wedge
R \lesssim n^{m-1}.$$
 It follows that 
 $$Q_2^2 \lesssim n^m J_{n,m,p,\chi}. $$
 This coupled with (\ref{ine-danhgiaQ1}) yields that 
 $$J_{n,m,p, \chi}^2 \lesssim n^m J_{n,m,p,\chi}.$$
 Hence $J_{n,m,p,\chi} \lesssim n^m$.  
 This finishes the proof.
\endproof

\begin{lemma}	 \label{le-estimateI} Let $\varphi \in W^*(\B)$ be a non-negative function with $\|\varphi\|_* \le 1$ as above.  Then, there exists a constant $c= c_{m,K}$ independent of $n$ and $\varphi$ such that 
 $$I_{n,m,p,K} \leq c n^m$$
 for every $n$. 
\end{lemma}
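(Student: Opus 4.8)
The plan is to follow the scheme of the proof of Lemma~\ref{le-estimateJ}, arguing by induction on $p$. As in Lemmas~\ref{le-estimateJp=0} and~\ref{le-estimateJ}, one first reduces --- by replacing $\varphi$ with $\min\{\varphi,M\}$ and using monotone convergence as $M\to\infty$, then regularising by convolution and using dominated convergence --- to the case where $\varphi$ and $\psi$ are smooth and $\varphi$ is bounded; in particular all the quantities below are then finite. Fix a smooth cut-off $\chi$ with $\chi\equiv 1$ on a neighbourhood of $K$ and $\supp\chi\Subset\B$. Since $\chi\equiv1$ on $K$ and the integrands are non-negative, it suffices to bound
\[
I_{n,m,p,\chi}:=\sup_{v_1,\dots,v_p}\int_\B \chi^2\, h_n^2\,\varphi^{2m}\, dd^c v_1\wedge\cdots\wedge dd^c v_p\wedge\omega^{k-p},
\]
the supremum being over psh $v_j$ on $\B$ with $0\le v_j\le 1$. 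For $p=0$ there is nothing to prove: $I_{n,m,0,\chi}\le\int_{\supp\chi}\varphi^{2m}\,\omega^k\lesssim 1\le n^m$ by Corollary~\ref{cor-Lmbound}.

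Assume the estimate at level $p-1$ for every compact subset of $\B$, and fix $v_1,\dots,v_p$ as above; put $R:=dd^c v_2\wedge\cdots\wedge dd^c v_p\wedge\omega^{k-p}$, a positive closed current. Integrating by parts (Stokes' formula; $R$ is closed and $\chi$ has compact support) and expanding $d(\chi^2 h_n^2\varphi^{2m})$ by the Leibniz rule, one gets
\[
\int_\B\chi^2\, h_n^2\,\varphi^{2m}\, dd^c v_1\wedge R = P_1+P_2+P_3,
\]
where $P_1$, $P_2$, $P_3$ gather the terms in which the derivative falls on $\chi$, on $h_n$, and on $\varphi$ respectively:
\begin{align*}
P_1 &= -2\int_\B \chi\, h_n^2\,\varphi^{2m}\, d\chi\wedge d^c v_1\wedge R,\\
P_2 &= -2\int_\B \chi^2\, h_n\,\varphi^{2m}\, dh_n\wedge d^c v_1\wedge R,\\
P_3 &= -2m\int_\B \chi^2\, h_n^2\,\varphi^{2m-1}\, d\varphi\wedge d^c v_1\wedge R.
\end{align*}

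To each $P_i$ I would apply the Cauchy--Schwarz inequality, distributing the weights $\varphi^{2m}$ and $h_n^2$ symmetrically and keeping $\chi^2$ on the $d^c v_1$-side, exactly as for $Q_1$ and $Q_2$ in Lemma~\ref{le-estimateJ}. In all three cases the $d^c v_1$-factor is $\int_\B\chi^2 h_n^2\varphi^{2m}\, dv_1\wedge d^c v_1\wedge R\le\frac12\int_\B\chi^2 h_n^2\varphi^{2m}\, dd^c v_1^2\wedge R\le\frac12\, I_{n,m,p,\chi}$, using $dv_1\wedge d^c v_1\le\frac12 dd^c v_1^2$ and that $v_1^2$ is psh with $0\le v_1^2\le1$. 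The complementary factors are handled as follows. For $P_1$, using $d\chi\wedge d^c\chi\lesssim 1_{\supp\chi}\,\omega$ and $h_n\le1$, it is $\lesssim\int_{\supp\chi}h_n^2\varphi^{2m}\, dd^c v_2\wedge\cdots\wedge dd^c v_p\wedge\omega^{k-p+1}\le I_{n,m,p-1,\supp\chi}\lesssim n^m$ by the inductive hypothesis. For $P_2$, using $dh_n\wedge d^c h_n\le T_n$ (Lemma~\ref{le-obser}(i)), it is $\le\int_{\supp\chi}\varphi^{2m}\, dd^c v_2\wedge\cdots\wedge dd^c v_p\wedge T_n\wedge\omega^{k-p}\le J_{n,m,p-1,\supp\chi}\lesssim n^m$ by Lemma~\ref{le-estimateJ}. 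For $P_3$, using $h_n^2\, d\varphi\wedge d^c\varphi\le h_n^2\, dd^c\psi_n = n\,h_n^2\, dd^c h_n\le n\,h_n T_n\le n\,T_n$ (Lemma~\ref{le-obser} together with $h_n dd^c h_n\le T_n$), it is $\le n\int_{\supp\chi}\varphi^{2m-2}\, dd^c v_2\wedge\cdots\wedge dd^c v_p\wedge T_n\wedge\omega^{k-p}\le n\, J_{n,m-1,p-1,\supp\chi}\lesssim n^m$ by Lemma~\ref{le-estimateJ}.

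Combining, $|P_i|\lesssim (n^m I_{n,m,p,\chi})^{1/2}$ for $i=1,2,3$; hence, for every choice of $v_1,\dots,v_p$, $\int_\B\chi^2 h_n^2\varphi^{2m}\, dd^c v_1\wedge R\lesssim (n^m I_{n,m,p,\chi})^{1/2}$, and taking the supremum gives $I_{n,m,p,\chi}\lesssim (n^m I_{n,m,p,\chi})^{1/2}$. Since $I_{n,m,p,\chi}<\infty$, this yields $I_{n,m,p,\chi}\lesssim n^m$, and therefore $I_{n,m,p,K}\le I_{n,m,p,\chi}\lesssim n^m$. I expect the delicate points to be organisational ones. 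First, one must carry the weight $h_n^2$ throughout: it is $h_n^2\, d\varphi\wedge d^c\varphi$ that is dominated by $n\,T_n$, not $d\varphi\wedge d^c\varphi$ alone, and it is precisely the derivative landing on $h_n$ (through $dh_n\wedge d^c h_n\le T_n$), together with this weighted bound, that converts a power of $h_n$ into a copy of $T_n$ and so brings the already-established quantities $J_{n,\bullet,p-1}$ into play --- this is how the reduction to the $J$-estimates works. Second, one needs $I_{n,m,p,\chi}<\infty$ a priori in order to absorb it from the right-hand side, which is why $\varphi$ is made bounded and smooth at the outset. Unlike for Lemma~\ref{le-estimateJ}, no secondary induction on $m$ is needed here, since every term produced by the Leibniz expansion is controlled either by a lower-$p$ instance of $I$ or by the $J$-estimates of Lemma~\ref{le-estimateJ}.
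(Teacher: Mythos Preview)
Your proof is correct and follows the same integration-by-parts/Cauchy--Schwarz scheme as the paper, but with one genuine simplification. In the paper's argument the $d\varphi$-term (the paper's $P_2$, your $P_3$) is bounded using $h_n^2\,dd^c\psi_n=n\,h_n^2\,dd^c h_n$ and then treating $h_n$ as an additional bounded psh function, which produces the factor $n\,I_{n,m-1,p,\supp\chi}$; this is why the paper needs a second induction on $m$ (with $p$ fixed). You instead push the estimate one step further, $h_n^2\,dd^c\psi_n\le n\,h_n\,T_n\le n\,T_n$, so the complementary factor becomes $n\,J_{n,m-1,p-1,\supp\chi}$, already controlled by Lemma~\ref{le-estimateJ}. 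This eliminates the inner induction on $m$ entirely. (For $m=0$ your $P_3$ vanishes, so the issue of $J_{n,-1,\cdot}$ does not arise.) The cost is nil and the gain is a cleaner single induction on $p$.

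One cosmetic point: your phrase ``distributing the weights $\varphi^{2m}$ and $h_n^2$ symmetrically'' is slightly inaccurate for $P_2$, where the whole factor $h_n^2$ must go to the $d^c v_1$-side (so that $dh_n\wedge d^c h_n\le T_n$ can be used on the other side without an extra $h_n$); but the bounds you actually write are the correct ones.
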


\begin{proof} We argue similarly as in the proof of Lemma  \ref{le-estimateJ}, by induction on $p$. If $p=0$, the desired assertion follows from Corollary \ref{cor-Lmbound}. We assume now that it is true for every $p'$ with $p' \le p-1$. 

As in the proof of Lemma \ref{le-estimateJp=0}, without loss of generality, we can assume that $\varphi$ and $\psi$ are smooth.   Let $\chi$ be a smooth cut-off function as in the proof of Lemma \ref{le-estimateJp=0}.  In order to prove the desired assertion, it suffices to bound from above
$$I_{n,m,p,\chi}:= \sup_{v_1,\ldots, v_{p}} \int_\B \chi^2 h_n^2 \varphi^{2m}  dd^c v_1 
\wedge \cdots \wedge dd^c v_{p} \wedge \omega^{k-p}.$$
We check that 
$$I_{n,m,p,\chi} \lesssim n^m$$
by induction on $m$ ($p$ now fixed). When $m=0$, this is obvious because $h_n$ is bounded, $\supp \chi \Subset \B$ and the desired inequality follows from the standard Chern-Levine-Nirenberg inequality for Monge-Amp\`ere operators. Assume that it is true for $m' \le m-1$. Let $R:= dd^c v_2 \wedge \cdots dd^c v_p \wedge \omega^{k-p}$.  By Stokes' formula, one gets 
\begin{align*}
 \int_{
	\mathbb{B}} \chi^2 h_n^2 \varphi^{2m}  dd^c v_1 
\wedge R  &= 	 -\int_{
	\mathbb{B}}  h_n^2 \varphi^{2m} 2\chi d\chi \wedge  d^c v_1 
\wedge R-   \quad \quad \int_{
	\mathbb{B}} \chi^2 h_n^2  2m \varphi^{2m-1} d\varphi  \wedge d^c v_1 
\wedge R-\\
& \quad  \int_{
	\mathbb{B}} \chi^2 \varphi^{2m}  2h_n d h_n \wedge d^c v_1 
\wedge R.
\end{align*}
Denote by $P_1,P_2,P_3$ the first, second and third terms in the right-hand side of the last equality. Arguing as in the estimation of $Q_1,Q_2$ in the proof of Lemma  \ref{le-estimateJ}, one obtains 
\begin{align}\label{ine-P1}
P_1^2 \le 4\int_{
 		\mathbb{B}}  h_n^2 \varphi^{2m}  d\chi \wedge  d^c \chi 
\wedge R \int_{
 		\mathbb{B}}\chi^2 h_n^2 \varphi^{2m}  d v_1 \wedge  d^c v_1 
\wedge R \lesssim I_{n,m,p-1,\supp \chi} I_{n,m,p,\chi} \lesssim n^m I_{n,m,p,\chi}\end{align}
by the induction hypothesis on $p$. We estimate $P_2$. By Cauchy-Schwarz inequality applied to $(\varphi^{m-1} d \varphi) \wedge (\varphi^m d^c v_1)$ and Lemma \ref{le-obser}, we have 
	\begin{align} \label{ine-P2}
P_2^2  & \lesssim  \int_{\mathbb{B}} \chi^2 h_n^2 \varphi^{2(m-1)} d\varphi  \wedge d^c \varphi  
		\wedge R   \int_{\mathbb{B}} \chi^2 h_n^2 \varphi^{2m} dv_1  \wedge d^c v_1 		\wedge R \\
		\nonumber
		&\leq \int_{\mathbb{B}} \chi^2 h_n^2 \varphi^{2(m-1)} dd^c  \psi_n
		\wedge R \int_{\mathbb{B}} \chi^2 h_n^2  \varphi^{2m} dd^c (v^2_1) 
		\wedge R \\
		\nonumber
		&\lesssim   n I_{n,m-1,p,\supp \chi} I_{n,m,p,\chi} \lesssim n^m I_{n,m,p,\chi}
	\end{align}
	by the induction hypothesis on $m$ (for $p$ fixed).   
Finally, we want to control $P_3$. Again by the Cauchy-Schwarz inequality applied to $d h_n \wedge (h_n d^c v_1)$ and Lemma \ref{le-obser}, 
\begin{align*}
	P_3^2 &  \leq 4\int_{
		\mathbb{B}} \chi^2 \varphi^{2m}  dh_n \wedge d^c h_n 
	\wedge R  \int_{
		\mathbb{B}} \chi^2 \varphi^{2m} h_n^2 dv_1 \wedge d^c v_1 
	\wedge R  \\
	& \lesssim  I_{n,m,p, \chi} \int_{
		\mathbb{B}} \chi^2 \varphi^{2m}  T_n 
	\wedge R \le I_{n,m,p,\chi} J_{n,m,p, \supp \chi} \lesssim n^m I_{n,m,p,\chi}
\end{align*}
by Lemma \ref{le-estimateJ}. Combining the last inequality, (\ref{ine-P1}) and (\ref{ine-P2}) gives $I_{n,m,p,\chi}^2 \lesssim n^m I_{n,m,p,\chi} $ hence the desired assertion. This finishes the proof.
\end{proof}

\subsection{Capacity estimates}
For every Borel set $E \subset \B$ the relative extremal psh function with respect to $E$ is defined by 
\begin{align*}
	u_E&:= \sup\{ u \ \mathrm{psh} \ \mathrm{on} \ \mathbb{B}, \ u \leq 0 \ \mathrm{on }\ \mathbb{B}, \   u \leq -1 \ \mathrm{on }\ E \}.
\end{align*}
Let $u^*_E$ be the upper semicontinuous regularization of $u_E$. Recall that $-1 \le u^*_E \le 0$ is a psh function on $\B$, which only differs from $u_E$ on a pluripolar set. Moreover, $(dd^c u^*_E)^k$ vanishes outside $\overline E$ and if $E$ is relatively compact in $\B$ then  
\begin{align}\label{eq-caprelative}
\mathrm{Cap}(E,\B)= \int_\B (dd^c u^*_E)^k= \int_{\overline E} (dd^c u^*_E)^k,
\end{align}
see \cite{BedfordTaylor} and also \cite{Klimek}.

Fix a compact set $K \Subset \B$. Let $2<\lambda<4$ be a constant and let $\varphi$ and $\psi$ be as above.   For $n\in \N$, we consider 
\[ K_n:= \{ z \in K, \ \varphi(z) \geq 2^n \ \mathrm{and} \ \psi \geq \ -\lambda^{n} \}. \]   
 
 Note that $K_n$ is a priori not closed. On the other hand by \cite[Theorem 2.10]{DinhMarVu} or \cite[Theorem 22]{VignyW}, $\varphi$ is quasi-continuous (with respect to capacity), \emph{i.e,} for every constant $\epsilon>0$, there exists an open subset $V_\epsilon$ in $\B$ such that $\mathrm{Cap}(V_\epsilon,\B) \le \epsilon$ and $\varphi$ is continuous on $\B \backslash V_\epsilon$. Hence $K_n \backslash V_\epsilon$ is closed on $\B$ (note that the set $\{\psi \ge -\lambda^n\}$ is already closed).

\begin{corollary}\label{key} Let $\varphi \in W^*(\B)$ be a non-negative function with $\|\varphi\|_* \le 1$ as above. Let $u_n:= u_{K_n}$.
Then,	for every $m$, there exists a constant $c_m$ independent of $\varphi$ such that for all $n \in \N$
	\[ \int_{K_n} (dd^c u^*_n)^k \le \mathrm{Cap}(K_n,\B) \leq  c_m  (\lambda/4)^{mn}.\]
	%where $\mathrm{Cap}(K_n)$ denotes the Bedford-Taylor capacity of $K_n$ in $\B$
\end{corollary}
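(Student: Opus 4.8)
The plan is to prove the two inequalities separately. The left one is immediate: since $u_n^*=u_{K_n}^*$ is psh with $-1\le u_n^*\le 0$, the function $w:=u_n^*+1$ is psh on $\B$ with $0\le w\le 1$ and $(dd^cw)^k=(dd^cu_n^*)^k$, so $w$ is an admissible competitor in the definition of $\mathrm{Cap}(K_n,\B)$ and $\int_{K_n}(dd^cu_n^*)^k=\int_{K_n}(dd^cw)^k\le \mathrm{Cap}(K_n,\B)$. Thus the whole content is the bound $\mathrm{Cap}(K_n,\B)\le c_m(\lambda/4)^{mn}$, which I would deduce from the energy estimate of Lemma \ref{le-estimateI}.

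Here is how. First I would fix the integer threshold $N=N(n):=\lceil 2\lambda^n\rceil$; since $\lambda^n\ge 1$ one has $\lambda^n\le N\le 3\lambda^n$. By the very definition of $K_n$, two things hold there simultaneously: $\varphi\ge 2^n$, hence $\varphi^{2m}\ge 4^{mn}$; and $\psi\ge -\lambda^n\ge -N$, so $\psi_N=\psi$ on $K_n$ and $h_N=1+\psi/N\ge 1-\lambda^n/N\ge 1/2$, i.e. $h_N^2\ge 1/4$. Consequently $4^{1-mn}h_N^2\varphi^{2m}\ge 1$ on $K_n$. Now for an arbitrary psh function $v$ on $\B$ with $0\le v\le 1$,
\[
\int_{K_n}(dd^cv)^k\le 4^{1-mn}\int_{K_n}h_N^2\varphi^{2m}(dd^cv)^k\le 4^{1-mn}\int_{K}h_N^2\varphi^{2m}(dd^cv)^k\le 4^{1-mn}I_{N,m,k,K},
\]
using $K_n\subset K$, nonnegativity of the integrand, and the definition of $I_{N,m,k,K}$ with $v_1=\cdots=v_k=v$. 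By Lemma \ref{le-estimateI}, $I_{N,m,k,K}\le c_{m,K}N^m\le 3^mc_{m,K}\lambda^{mn}$, so $\int_{K_n}(dd^cv)^k\le 4\cdot 3^mc_{m,K}(\lambda/4)^{mn}$. Taking the supremum over all such $v$ yields $\mathrm{Cap}(K_n,\B)\le c_m(\lambda/4)^{mn}$ with $c_m:=4\cdot 3^mc_{m,K}$, a constant depending only on $m$ and $K$ (through the constant furnished by Lemma \ref{le-estimateI}), not on $\varphi$ or $n$. Together with the first paragraph this finishes the proof.

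The point that actually requires care — and the reason $\lambda$ is taken in $(2,4)$ — is the interplay between the two thresholds cutting out $K_n$: the level $2^n$ for $\varphi$ produces the gain $4^{-mn}$, while the level $-\lambda^n$ for $\psi$ forces the use of the weight $h_N$ with index $N$ comparable to $\lambda^n$, so that the only available control of the relevant energy is $I_{N,m,k,K}\le c_{m,K}N^m\sim \lambda^{mn}$. These combine to the geometric factor $(\lambda/4)^{mn}$ precisely because $\lambda<4$, whereas the constraint $\lambda>2$ is what lets us keep $h_N\ge 1/2$ on $K_n$ with $N$ only linear in $\lambda^n$. I would also remark that, because we argue directly with the supremum definition of the capacity and integrate the energy densities over the fixed compact $K\supset K_n$, the fact that $K_n$ need not be closed causes no difficulty here, and the quasi-continuity of $\varphi$ is not needed for this particular estimate. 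Everything else is a routine substitution, so I do not anticipate any serious obstacle beyond choosing these thresholds correctly.
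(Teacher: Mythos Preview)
Your proof is correct and rests on the same core idea as the paper's: on $K_n$ one has $h_N^2\varphi^{2m}\gtrsim 1$ for $N\sim 2\lambda^n$, so the Monge--Amp\`ere mass is controlled by $4^{-mn}I_{N,m,k,K}$, and Lemma~\ref{le-estimateI} gives $I_{N,m,k,K}\lesssim \lambda^{mn}$. The one genuine difference is in how you reach $\mathrm{Cap}(K_n,\B)$. You work directly with the supremum definition, bounding $\int_{K_n}(dd^cv)^k$ for each admissible $v$ and then taking the sup; the paper instead passes through the identity~\eqref{eq-caprelative} for the relative extremal function, which is only available for compact sets, and therefore first excises an open set $V_l$ of small capacity (using quasi-continuity of $\varphi$) to make $K_n\setminus V_l$ closed, then lets $l\to\infty$. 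Your route is slightly more direct and, as you observe, renders the quasi-continuity step unnecessary for this corollary. One small aside: your remark that ``$\lambda>2$ is what lets us keep $h_N\ge 1/2$ on $K_n$ with $N$ only linear in $\lambda^n$'' is not quite right --- that lower bound holds for any $\lambda>0$ once $N\ge 2\lambda^n$; the constraint $\lambda>2$ is used only later, to allow $2^\alpha<\lambda$ in the construction of $u$.
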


\begin{proof} We have 
$$\mathrm{Cap}(K_n,\B)= \int_{\overline K_n} (dd^c u^*_n)^k \ge \int_{K_n} (dd^c u^*_n)^k.$$
As noted above,  $K_n$ is not necessarily closed. Let $l$ be a positive integer and $V_l$ be an open subset in $\B$ so that $\mathrm{Cap}(V_l, \B) \le l^{-1}$ and $\varphi$ is continuous on $\B \backslash V_l$. %We can assume furthermore that $V_l$ is decreasing, that means $V_{l} \subset V_{l'}$ if $l \ge l'$.
 Observe that 
\begin{align}\label{ine-danhgiacapa}
\mathrm{Cap}(K_n,\B) &\le \mathrm{Cap}(K_n \backslash V_l, \B)+ \mathrm{Cap}(V_l, \B) \\
\nonumber
&\le \mathrm{Cap}(K_n \backslash V_l, \B)+l^{-1}\\
\nonumber
&= \int_{K_n \backslash V_l} (dd^c u^*_{K_n \backslash V_l})^k+ l^{-1}
\end{align}
because of (\ref{eq-caprelative}) and the fact that  $K_n \backslash V_l$ is closed (hence compact). %Observe next that $u^*_{K_n \backslash V_l}$ decreases to $u^*_n$  as $l \to \infty$. It follows that $(dd^c u^*_{K_n \backslash V_l})^*$ converges weakly to $(dd^c u^*_n)^k$ as $l \to \infty$. 
%Hence for every $l_0$ fixed, we get 
%$$\limsup_{l \to \infty} \int_{K_n \backslash V_{l_0}} (dd^c u^*_{K_n \backslash V_{l_0}})^k \le \int_{K_n \backslash V_{l_0}} (dd^c u^*_n)^k.$$
%We infer that 
%$$\mathrm{Cap}(K_n,\B) \le $$
%%It remains to check the desired inequality.  
%
%Recall $\varphi_n=  2^{-n}\min \{\varphi, 2^n\} \ge 1 $ on $K_n$.

 Observe that $h_{2\lambda^n} \geq 1/2$ on $K_n$. Thus
	\begin{align*}
	  \int_{K_n \backslash V_l} (dd^c u^*_{K_n \backslash V_l} )^k  &\leq  4 \int_{K_n \backslash V_l} h_{2\lambda^n}^2 4^{-nm}\varphi^{2m} (dd^c u^*_{K_n \backslash V_l} )^k \\
	  &\le 4 \int_{K} h_{2\lambda^n}^2 4^{-nm}\varphi^{2m} (dd^c u^*_{K_n \backslash V_l} )^k \\
&	   \le  c 4^{-nm}\lambda^{nm}  
\end{align*}
	by Lemma \ref{le-estimateI} applied to $\varphi$, where $c$ is a constant depending only on $m,K$ (note that $K_n \subset K$). This combined with (\ref{ine-danhgiacapa}) gives
$$\mathrm{Cap}(K_n,\B) \le C 4^{-nm} \lambda^{mn}+ l^{-1}$$
for some constant $C>0$ depending only on $k$ and $m$. Letting $l \to \infty$ yields	the second desired inequality. The proof is finished. 
\end{proof}

\begin{proof}[End of the proof of Theorem \ref{tm:main?}]
	Let $\alpha \in [1,2)$. We first  consider $\varphi \in W^*(\B)$ with $\varphi \ge 0$ and let $\psi$ be a negative psh function on $\B$ with $d \varphi \wedge d^c \varphi \le dd^c \psi$. Let $K_n, u_n$ be as above.  Let $\lambda$ be a number with $2^\alpha < \lambda$. Consider the non-positive function
	\[ u:= \sum_{n=1}^\infty 2^{n \alpha } \left(u^*_n + \frac{\max(\psi, -\lambda^n)}{\lambda^n}\right).\]
	We will prove that \\
	
	\noindent
	\textbf{Claim.} $2^\alpha u \leq -\varphi^\alpha$ outside a pluripolar set. \\
	
	\noindent
	Let 
	$$A_n:=\{x \in \B: \varphi(x) \in [2^n, 2^{n+1}),\quad \psi(x) >-\lambda ^n\}.$$ 
  Recall that $\varphi$ is identified with a representative which is well defined, hence finite, outside a pluripolar set that we denote by $A_\infty$.
  
  Let $x \in \B \backslash \big(\cup_{n=1}^\infty A_n \cup A_\infty\big)$. Thus,  there exists a positive integer $n$ so that  $\varphi(x) \in [2^n, 2^{n+1})$ and $\psi(x)  \le -\lambda ^n$ (note that $0 \le \varphi(x)< \infty$). Consequently
	$$u(x) \leq  2^{n \alpha } \left(\frac{\max(\psi, -\lambda^n)}{\lambda^n}\right)= -2^{n \alpha }.$$
	In other words, one obtains
	$$2^\alpha u \le -\varphi^\alpha$$
	on the complement of $A:= \cup_{n \in \N} A_n \cup A_\infty$. Since $u_n^*= u_n$ outside some pluripolar set $E_n$, we see that for $x \in A_n \backslash E_n$, there holds $u(x) \leq 2^{n \alpha } u_n(x)=- 2^{n \alpha }$, where we note that $u_n=-1$ on $K_n$, which contains $A_n$. Hence 
	$$2^\alpha u \le - \varphi^\alpha$$
	on $A \backslash (\cup_{n=1}^\infty E_n \cup A_\infty)$. Hence  the claim follows. \\

	We now show that $u$ is not identically $-\infty$. Observe first that the series 
	$$w:= \sum_{n=1}^\infty 2^{n \alpha }  \frac{\max(\psi, -\lambda^n)}{\lambda^n}$$ is a well-defined psh function by our choice of $\lambda$.

	 Let $\rho(z):=\|z\|^2-1$. Hence $\rho=0$ on $\partial \mathbb{B}$ and $dd^c \rho = \omega$. Since $\rho<0$ in $\B$ and $\rho$ is continuous, we see that there exists a constant $M_K$ depending on $K$ so that $M_K\rho \le -1$ on $K$ (hence on $K_n$ because $K_n \subset K$). Thus 
$$	u_n \ge M_K \rho
$$
on $\B$ for every $n$,  by the envelope in the definition of $u_n$. It follows that for every $w \in \partial \B$ and $n \in \N^*$, one has
\begin{align}\label{ine-chanduoiun}
	\liminf_{z \to w, z \in \B} (2^{n \alpha} u_n^*(z)- n^{-2}\rho(z)) \ge  |2^{n \alpha} M_K- n^{-2}| \lim_{z \to w}\rho(z)=0.
\end{align}
 For $n \in \N^*$, let 
	\[B_n := \left\{ 2^{n \alpha} u^*_n < \frac{1}{n^2}  \rho  \right\}. \]
By (\ref{ine-chanduoiun}), one can apply	the comparison principle (\cite[Theorem 1.16]{K-memoir}) to $2^{n \alpha} u^*_n$  and $\frac{1}{n^2}  \rho$. Thus one obtains that
	\[  \frac{1}{n^{2k}}\int_{B_n} (dd^c \rho)^k \leq  2^{kn \alpha } \int_{B_n} (dd^c u^*_n)^k \leq  2^{kn\alpha} \mathrm{Cap}(K_n) \leq  2^{nk\alpha} c_m \frac{\lambda^{nm}}{2^{2nm}} ,\]
 for any $m \in \N$,	where $c_m$ is the constant given by Corollary~\ref{key}. 
	Hence for every $n_0 \ge 1$ we get 
	\[ \sum_{n\geq n_0}  \int_{B_n} \omega^k \leq \sum_{n\geq n_0} n^{2k} 2^{nk\alpha} c_m \frac{\lambda^{nm}}{2^{2nm}}= c_m \sum_{n \ge n_0} n^{2k} (2^{k\alpha}  4^{-m}\lambda^m)^n. \]
	Thus by choosing $m$ large enough (so that $2^{k\alpha} 4^{-m} \lambda^m <1$), we see that 
	$$\sum_{n\geq n_0}  \int_{B_n} \omega^k < \int_{\mathbb{B}} \omega^k$$ for $n_0$ large enough (independent of $\varphi$ and $K$). In particular, there is $x_0 \in \mathbb{B} \backslash \cup_{n \geq n_0} B_n$. Hence we get  
	\[ \forall n\geq n_0,  \ 0 \geq 2^{n\alpha} u^*_n(x_0) > n^{-2}\rho(x_0).   \]
	It follows that 
	$$u(x_0)= w(x_0)+ \sum_{n \ge 1} 2^{n \alpha} u^*_n(x_0) \ge w(x_0)+O(1)+ \rho(x_0) \sum_{n \ge 1} n^{-2} \ge w(x_0) +O(1)+2 \rho(x_0)>- \infty.$$
	Consequently, $u \not \equiv -\infty$. This gives the existence of $u$ in Theorem~\ref{tm:main?} when $\varphi \ge 0$.\\ 
	
	In the next paragraphs, we will show that one can choose $u$ so that  the $L^1$-norm of $u$ is bounded uniformly (still for $\varphi \ge 0$). Define
	$$M_\varphi:= \inf\{ \|u\|_{L^1(K)}: |\varphi|^\alpha \le -u, \text{ $u$ is a negative psh function on $\B$}\}.$$
Let $M$ be the supremum of $M_\varphi$ for $\varphi$ running over non-negative functions on $W^*(\B)$ of $*$-norm at most $1$. %=\sup_{\varphi: \|\varphi\|_* \le 1, \varphi \ge 0} M_\varphi<\infty$.
We check that $M< \infty$.  Suppose on contrary that $M= \infty$.  	Hence, one can find a sequence $(\varphi_n)_n$ of non-negative functions in $W^*(\B)$ with $\|\varphi_n\|_* \leq 1$ such that  $M_{\varphi_n}  \geq 2^n$.  Define 
	 \[v:= \sum_{n \ge 1}n^{-2} \varphi_n\]
	 which is an element in $W^*(\B)$ because
 $W^*(\B)$ is a Banach space. Hence by the previous part of the proof, there exists a negative psh function $u$ on $\B$ with $|u| \geq v^\alpha$. In particular, $|u| \geq n^{-2 \alpha } \varphi^\alpha_n$. Thus, we get $ n^{2 \alpha } |u| \ge \varphi_n^\alpha$. It follows that 
$$M_{\varphi_n} \le \|n^{2 \alpha} u\|_{L^1(K)} = n^{2 \alpha} \|u\|_{L^1(K)}\ll 2^n$$
if $n$ is big enough. This is a contradiction because $M_{\varphi_n} \ge 2^n$. Hence $M<\infty$. We infer that for every $\alpha \in [1,2)$,  there exists a constant $C_\alpha>0$ so that for every non-negative function $\varphi\in W^*(\B)$ of $*$-norm $\le 1$, there exists a negative psh function $u$ on $\B$ satisfying $- u \ge |\varphi|^\alpha$ on $K$ and $\|u\|_{L^1(K)} \le C_\alpha$. 

Now we consider the general case where $\varphi \in W^*(\B)$ with $\|\varphi\|_* \le 1$. Hence $\varphi_1:= \max(\varphi, 0)$ and $\varphi_2:= -\min(\varphi, 0)$ are both of $*$-norm $\le 1$. Applying the above result to $\varphi_1$ and $\varphi_2$, we find  negative psh functions $u_1,u_2$ on $\B$ so that 
$$-u_j \ge |\varphi_j|^\alpha, \quad \|u_j\|_{L^1(K)} \le C_\alpha.$$
This combined with the fact that $\varphi= \varphi_1 - \varphi_2$ yields
$$|\varphi|^\alpha \lesssim -(u_1+u_2), \quad \|u_1+ u_2\|_{L^1(K)} \le 2 C_\alpha.$$
By putting $u:= u_1+ u_2$, we see that $u$ satisfies the desired properties. This finishes the proof.
%By considering $\max\{\varphi_n, 0\}$ in place of $\varphi_n$, we can assume that $\varphi_n \ge 0$. This finishes the proof. 
%  which is absurd since   $n^{2 \alpha } \ll 2^n$.	
\end{proof}

\begin{remark} \normalfont  Observe that if $\varphi$ satisfies $d \varphi \wedge d^c \varphi \le dd^c \psi$ with $\psi \in L^\infty$, then the proof is much simpler and one can show actually that for any $\alpha \in \R^+$, there is a psh function $u$ such that $|u| \geq |\varphi|^\alpha$ on $K$ and the $L^1$-norm on $K$ of $u$ is uniformly bounded.
%(ii) By similar arguments as in the part of the proof of Theorem \ref{tm:main?} concerning the uniform $L^1(K)$-bound, one can see that the following holds. $\mathrm{PSH}(\B)$ denotes the space of psh functions on $\B$.  Let $f: \mathrm{PSH}(\B) \to \R_{\ge 0}$ be a semi-norm. Then for every $\alpha \in [1,2)$ and $K \Subset \B$,  then there exists a constant $C>0$ such that for every $\varphi \in W^*(\B)$ with $\|\varphi\|_* \le 1$, one can find a negative  psh function $u$ on $\B$ such that $|\varphi|^\alpha \le -u$ on $K$ outside a pluripolar set and $f(u) \le C$.  
% the thing is that a norm, finite at every PSH functions, is equivalent to the  $L^1(K)$ norm by compacity, so this is not really remarkable and proving it for $L^1(K)$ will imply it automatically for every such norm. For a semi norm, it is essentially the same: a semi-norm, finite at every PSH functions, is dominated by the  $L^1(K)$ norm by compacity.
\end{remark}

Let $X$ be a compact K\"ahler manifold and $\omega$ be a K\"ahler form on $X$. We can define $W^*(X)$ in a way similar to $W^*(U)$, see \cite{DinhSibonyW,VignyW}. Recall that for a K\"ahler form $\eta$ on $X$, we say that a function $u$ is  $\eta$-psh if $dd^c u + \eta \geq 0$. Here is a global version of  Theorem \ref{tm:main?}. 

\begin{theorem} \label{th-tm:main?global} Let $\alpha \in [1,2)$. Then there exists a constant $C>0$ such that for every $\varphi \in W^*(X)$ with $\|\varphi\|_* \le 1$ there is a negative $C \omega$-psh function $u$ on $X$ such that 
$$|\varphi|^\alpha  \le -u, \quad \|u\|_{L^1(X)} \le C.$$
\end{theorem}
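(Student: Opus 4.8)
The plan is to reduce the global statement to the already-established local version (Theorem \ref{tm:main?}) via a standard gluing/patching argument adapted to the quasi-psh setting. First I would fix a finite cover of the compact K\"ahler manifold $X$ by coordinate balls: choose open sets $U_1,\dots,U_N$ biholomorphic to the unit ball $\B \subset \C^k$, together with slightly smaller relatively compact balls $V_j \Subset U_j$ that still cover $X$. Under such a biholomorphism the standard Euclidean form on $\B$ and the restriction $\omega|_{U_j}$ are comparable, and $\|\varphi\|_{W^*(X)} \le 1$ transfers to a uniform bound $\|\varphi\|_{W^*(U_j)} \le C_0$ for some $C_0$ depending only on the cover; rescaling, each $\varphi/C_0$ has $*$-norm at most $1$ on $U_j \cong \B$. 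Applying Theorem \ref{tm:main?} (with the fixed exponent $\alpha \in [1,2)$ and the compact set corresponding to $\overline{V_j}$) gives, for each $j$, a psh function $w_j$ on $U_j$ with $|\varphi|^\alpha \le -c_1 w_j$ on $V_j$ outside a pluripolar set and $\|w_j\|_{L^1(V_j)} \le c_2$, where $c_1, c_2$ depend only on $\alpha$ and the cover, not on $\varphi$.

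Next I would assemble these local potentials. The naive $\min_j$ of the $w_j$ does not make sense globally since each is only defined on its own chart, so instead I would pass to $\omega$-psh functions: since $w_j$ is psh (hence $dd^c w_j \ge 0 \ge -C'\omega$) on $U_j$, and since adding a fixed large negative constant does not change the inequality $|\varphi|^\alpha \le -c_1 w_j$ while making all the $w_j$ uniformly bounded above by, say, $-1$, I would look for a single $C\omega$-psh function $\tilde u$ on $X$ that is bounded above by (a constant times) each $w_j$ on the corresponding $V_j$. The cleanest route is a regularized-minimum / partition-of-unity gluing: using a smooth partition of unity $(\chi_j)$ subordinate to $(V_j)$ and exploiting that on overlaps $V_i \cap V_j$ the difference $w_i - w_j$ is bounded (both are controlled above and below — below because both dominate $-|\varphi|^\alpha/c_1$ which is in $L^1$, but pointwise boundedness below is the delicate point), one builds $\tilde u := \big(\text{regularized } \min_j w_j\big)$ locally and checks it patches to a global function whose $dd^c$ is bounded below by $-C\omega$ for $C$ large enough (absorbing the Hessians of the $\chi_j$ and the comparison constants between $\omega$ and the flat metrics). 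On each $V_j$ one then has $\tilde u \le w_j + O(1)$, hence $|\varphi|^\alpha \le -C'' \tilde u + O(1)$ everywhere; subtracting a constant and rescaling $\tilde u$ gives a negative $C\omega$-psh $u$ with $|\varphi|^\alpha \le -u$ off a pluripolar set, and $\|u\|_{L^1(X)} \le \sum_j \|u\|_{L^1(V_j)} \lesssim \sum_j (\|w_j\|_{L^1(V_j)} + 1) \le C$.

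The main obstacle is the gluing step: the functions $w_j$ produced by Theorem \ref{tm:main?} are only controlled in $L^1$ and pointwise \emph{from above}, not uniformly from below on overlaps, so a direct $\min$ construction is not obviously $C\omega$-psh with a $\varphi$-independent constant $C$. I expect the right fix is to note that what we actually need from each chart is not $w_j$ itself but the inequality $|\varphi|^\alpha \le -w_j$; so one can first replace $w_j$ by $\max(w_j, -|\varphi|^\alpha)$ — no longer psh, but sandwiched — and instead carry out the patching at the level of the \emph{construction} inside the proof of Theorem \ref{tm:main?}, i.e. run the relative-extremal-function series argument globally using a single negative $\omega$-psh $\psi$ on $X$ with $d\varphi \wedge d^c\varphi \le dd^c\psi + C\omega$ (which exists by definition of $W^*(X)$), and $\omega$-psh analogues $u_n^*$ of the relative extremal functions of the sets $K_n$, the Chern–Levine–Nirenberg and comparison-principle inputs all having standard $C\omega$-psh versions on compact K\"ahler manifolds. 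This global rerun avoids patching incompatible local potentials altogether, and the uniform $L^1$-bound then follows exactly as in the local case via the Banach-space trick with $v := \sum n^{-2}\varphi_n$. I would present whichever of the two routes (chart-patching versus global rerun) turns out to require less bookkeeping; the global rerun is conceptually cleaner and is what I would attempt first.
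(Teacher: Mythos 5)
Your global rerun is indeed the route the paper takes — it says explicitly that the proof ``follows almost line by line the proof of Theorem \ref{tm:main?}'' and that ``the only new issue to handle is to choose $u_n$ more carefully.'' You correctly diagnose why the chart-patching route is hopeless (only $L^1$ control from below, no pointwise control on overlaps), and correctly identify that the $K_n$--series argument, the comparison principle, and Chern--Levine--Nirenberg all have $\eta$-psh versions on compact K\"ahler manifolds. So the strategy is right.

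However there is a genuine gap, and it is precisely the ``choose $u_n$ more carefully'' step. You propose to use ``$\omega$-psh analogues $u_n^*$ of the relative extremal functions of the sets $K_n$.'' This cannot work as stated: if each $u_n^*$ is merely $\omega$-psh, then the candidate function $u := \sum_{n\ge 1} 2^{n\alpha}\bigl(u_n^* + \lambda^{-n}\max(\psi,-\lambda^n)\bigr)$ satisfies only
$dd^c u \ge -\bigl(\sum_{n\ge 1} 2^{n\alpha}\bigr)\,\omega - C'\omega,$
and $\sum_n 2^{n\alpha}$ diverges, so you do not obtain a $C\omega$-psh function for any finite $C$. The fix, which is the whole point of the global proof, is to take the relative extremal function of $K_n$ with respect to a \emph{shrinking} K\"ahler form, namely $u_n := u_{K_n,\,3^{-n\alpha}\omega}$. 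Then $u_n^*$ is $3^{-n\alpha}\omega$-psh, so $2^{n\alpha}u_n^*$ is $(2/3)^{n\alpha}\omega$-psh, and the geometric series $\sum_n (2/3)^{n\alpha}$ converges, giving a uniform $C'$ with $dd^c u + C'\omega \ge 0$. One then has to re-verify that the comparison-principle estimate still yields $\mathrm{Cap}_{3^{-n\alpha}\omega}(K_n)\lesssim (\lambda/4)^{mn}$, that the bound on $\sum_n \int_{A_n}\omega^k$ still goes to zero after picking $m$ large (the extra factor $3^{(k+1)n\alpha}$ coming from the shrunk form is again dominated by $(4/\lambda)^{nm}$), and that $u\not\equiv-\infty$. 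These verifications are routine once the shrinking forms are introduced, but without that device the construction you sketch produces an object that is not quasi-psh with uniform constant, which is the content of the theorem. The uniform $L^1$ bound via the Banach-space trick with $v = \sum n^{-2}\varphi_n$ you describe is correct and matches the paper.
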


\proof We follow almost line by line the proof of Theorem \ref{tm:main?}. The only new issue to handle is to choose $u_n$ more carefully to obtain that $u$ is $C \omega$-psh for some uniform constant $C$. Recall that for Borel set $E$ in $X$, 
$$\mathrm{Cap}_\omega(E):= \sup \big\{ \int_X (dd^c v+ \omega)^n: 0 \le v \le 1,  \text{ $v$ is $\omega$-psh}\big\}.$$
For every K\"ahler form $\eta$ on $X$, we put 
\begin{align*}
	u_{E, \eta}:= \sup\{ u  \text{ negative $\eta$-psh }:     u \leq -1 \ \mathrm{on }\ E \}.
\end{align*}
Let $u^*_{E,\eta}$ be the upper semicontinuous regularization of $u_E$. As in the local setting, one has $-1 \le u^*_{E,\eta} \le 0$ which is $\eta$-psh on $X$, and $u^*_{E,\eta}$ differs only from $u_{E,\eta}$ on a pluripolar set and $(dd^c u^*_E+ \eta)^k$ vanishes on $ \{u^*_{E, \eta} <0\} \backslash \overline E$ (when $\{u^*_{E, \eta} <0\}$ is non empty), and
\begin{align}\label{eq-caprelative_compact}
\mathrm{Cap}_\eta(E)= \int_X -u^*_{E, \eta}(dd^c u^*_{E,\eta}+ \eta)^k= \int_{\overline E} -u^*_{E, \eta} (dd^c u^*_{E,\eta}+ \eta)^k.
\end{align}
We refer to \cite{GZ} for proofs of these statements which follow, more or less, from those in the local setting in \cite{BedfordTaylor}.

Let $\varphi \in W^*(X)$ with $\|\varphi\|_* \le 1$ and let $T$ be a closed positive $(1,1)$-current on $X$ so that $d \varphi \wedge d^c \varphi \le T$ and $\int_X T \wedge \omega^{k-1} \le 1$.  Because of the bound on the mass of $T$, there exists a constant $C$ independent of $T$ so that we can write  $T= dd^c \psi + \theta$ for some smooth form $\theta$ and $\theta$-psh function $\psi$ with $\sup_X \psi =0$ and $\theta \le C \omega$.    As in the proof of Theorem \ref{tm:main?}, it suffices to consider the case where $\varphi \ge 0$. 

Let $2<\lambda<4$ be a constant $2^\alpha < \lambda$.     For $n\in \N$, we consider 
\[ K_n:= \{ z \in X, \ \varphi(z) \geq 2^n \ \mathrm{and} \ \psi \geq \ -\lambda^{n} \} \]   
and 
$$u_n:= u_{K_n, 3^{-n \alpha}\omega}.$$
Arguing as in the proof of Corollary \ref{key}, one sees that 
	for every $m$, there exists a constant $c_m$ independent of $\varphi$ such that for all $n \in \N$
	\begin{align}\label{ine-danhgiacapglobal}
	 \mathrm{Cap}_{3^{-n \alpha} \omega}(K_n) \leq  c_m  (\lambda/4)^{mn}.
	 \end{align}
 Consider the non-positive function
	\[ u:= \sum_{n=1}^\infty 2^{n \alpha } \left(u^*_n + \frac{\max(\psi, -\lambda^n)}{\lambda^n}\right).\]
As before we have  $2^\alpha u \leq -\varphi^\alpha$ outside a pluripolar set. Let 
$$\eta:= \sum_{n=1}^\infty 2^{n \alpha} (3^{-n \alpha} \omega)+ 2^{n \alpha}\lambda^{-n} C \omega \le C' \omega$$
for some constant $C'>0$ (depending only on $\alpha, \lambda$) by our choice of $\lambda$.  Since $u^*_n$ is $3^{-n \alpha} \omega$-psh and $\psi$ is $C \omega$-psh, we get
$$dd^c u + \eta \ge 0 $$
if $u \not  \equiv -\infty$. 
It follows that $u$ is $C' \omega$-psh if $u \not \equiv -\infty$. 	It remains to check that $u$ is not identically equal to $-\infty$. To this end, we argue as in the proof of Theorem \ref{tm:main?}. 

For $n \in \N^*$, let 
	\[A_n := \left\{ 2^{n \alpha} u^*_n < -\frac{1}{n^2}  \right\}. \]
	By the comparison principle (see \cite[Theorem 6.4]{K-memoir}), for any $m \in \N$,
	\begin{align*}
	  \int_{A_n} (3^{- n \alpha} \omega)^k &\leq  \int_{A_n} (dd^c u^*_n+ 3^{-n \alpha} \omega)^k  \\
	  &\le  2^{n \alpha} n^{2} \int_{A_n} -u^*_n (dd^c u^*_n+ 3^{-n \alpha} \omega)^k \\
	  %&\leq 2^{n \alpha} n^{2} \int_{A_n} -u^*_n (dd^c u^*_n+ 3^{-n \alpha} \omega)^k\\
	  & \le  2^{n \alpha} n^{2} \mathrm{Cap}_{3^{- n \alpha} \omega}(K_n) \leq  c_m 2^{n \alpha} n^{2} \frac{\lambda^{nm}}{2^{2nm}},
	  \end{align*}
	  by (\ref{ine-danhgiacapglobal}).  	Hence for every $n_0 \ge 1$ we get 
	\[ \sum_{n\geq n_0}  \int_{A_n} \omega^k \leq \sum_{n\geq n_0} c_m n^2  3^{n(k+1) \alpha} \frac{\lambda^{nm}}{2^{2nm}}= c_m \sum_{n \ge n_0} n^{2} (3^{(k+1)\alpha}  4^{-m}\lambda^m)^n. \]
	Thus by choosing $m$ large enough (so that $3^{(k+1)\alpha} 4^{-m} \lambda^m <1$), we see that 
	$$\sum_{n\geq n_0}  \int_{A_n} \omega^k < \int_{X} \omega^k$$ for $n_0$ large enough (independent of $\varphi$). In particular, there is $x_0 \in X \backslash \cup_{n \geq n_0} A_n$. Hence we get  
	\[ \forall n\geq n_0,  \ 0 \geq 2^{n\alpha} u^*_n(x_0) > n^{-2}.   \]
Hence $u(x_0)> -\infty$. The fact that one can choose $u$ so that its $L^1$-norm is uniformly bounded is proved exactly in the same way as in the proof of Theorem \ref{tm:main?}.  This finishes the proof.
\endproof

\begin{example} \label{ex-alpha2} \normalfont Let $\alpha>2$ and  $k=1$. For every $\delta \in (0,1/2)$ consider $\varphi(z):= (-\log |z|^2)^{1/2- \delta}$. Direct computations show that
$$d \varphi \wedge d^c \varphi=  \frac{i d z \wedge d \bar z}{ \pi |z|^2 (\log |z|^2)^{1+2\delta}}$$
which is of finite mass on $U:= B(0,1/2)$ in $\C$. Hence $\varphi \in W^{1,2}(U)= W^*(U)$. However since $\alpha>2$, one can choose $\delta$ small enough so that  $\beta:= \alpha (1/2-\delta)>1$. Consequently  $\varphi^\alpha= (-\log |z|^2)^{\beta}$ is not bounded from above by minus of a subharmonic function on $B(0,1/4)$ because if there were such a function, then its Lelong number at $0$ would be equal to $\infty$ (a contradiction). 
\end{example}

\section{Proof of Theorem \ref{th:Lebesgues}} \label{sec-mainthoerporof}

We first present some more auxiliary results about the plurifine topology. Although the materials seem to be standard, we will give details here because we could not find a proper reference for them.  % for the proof of the main theorem. 

\begin{lemma}\label{le-plurifinetopo} Let $u$ be a negative psh function on $\B$. Let $x_0 \in \B$ so that $u(x_0)>- \infty$. Let $\delta>0$ be a constant and let 
$$E_\delta(u):= \{x \in U: |u(x)-u(x_0)| \ge \delta \}.$$
  Let $c_r:= \mathrm{Leb}(B(x_0,r))$ and $b_r:= \mathrm{Leb}(B(x_0,r) \cap E_\delta)$.
Then $b_r/c_r \to 0$ as $r \to 0$.
\end{lemma}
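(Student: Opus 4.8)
The plan is to show that the density of $E_\delta(u)$ at $x_0$ is zero by exploiting that $u$ is plurisubharmonic and hence, after translating so that $x_0=0$, comparable near $0$ to its spherical/ball averages, which converge to $u(x_0)$.

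First I would reduce to the case $x_0=0$ and $u(0)=0$ (subtracting the constant $u(x_0)$, which is finite by hypothesis; this does not affect $E_\delta$). Since $u$ is psh and $u(0)=0$, the sub-mean-value inequality gives, for all small $r$,
\[
0 = u(0) \le \frac{1}{c_r}\int_{B(0,r)} u \, d\mathrm{Leb},
\]
so $\int_{B(0,r)} u \, d\mathrm{Leb} \ge 0$; on the other hand $u \le 0$, so in fact $\frac{1}{c_r}\int_{B(0,r)} |u|\, d\mathrm{Leb} = -\frac{1}{c_r}\int_{B(0,r)} u\, d\mathrm{Leb} \to 0$ as $r \to 0$ — this last limit because the ball averages of a psh function decrease to its value at the center as $r \downarrow 0$ (monotonicity of averages of psh functions, together with $u(0) = 0 > -\infty$). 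Thus $0$ is a Lebesgue point of $u$ with value $u(0)$, and in particular $\frac{1}{c_r}\int_{B(0,r)} |u - u(0)|\, d\mathrm{Leb} \to 0$.

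Next I would split $E_\delta(u)$ according to the sign of $u - u(0) = u$. On $E_\delta \cap \{u \le -\delta\}$ we have $|u - u(0)| = |u| \ge \delta$, so by Chebyshev
\[
\mathrm{Leb}\big(B(0,r) \cap E_\delta \cap \{u \le -\delta\}\big) \le \frac{1}{\delta}\int_{B(0,r)} |u - u(0)|\, d\mathrm{Leb} = o(c_r).
\]
For the other part, $E_\delta \cap \{u \ge \delta\}$: since $u \le 0 < \delta$ this set is empty (recall $u(x_0)=0$ after normalization, and $u$ is negative), so there is nothing to estimate there. Dividing by $c_r$ gives $b_r/c_r \to 0$.

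I do not expect a serious obstacle here; the one point requiring a little care is the claim that the ball averages of a psh function decrease to the value at the center, i.e. that $r \mapsto \frac{1}{c_r}\int_{B(0,r)} u\, d\mathrm{Leb}$ is monotone nondecreasing in $r$ with limit $u(0)$ as $r \to 0$ — this is a standard fact (it follows from the analogous statement for sphere averages by integrating in the radial variable, and uses upper semicontinuity plus the sub-mean-value property), and since $u(0) > -\infty$ the limit is finite and equals $u(0)$. Everything else is Chebyshev's inequality and the trivial observation that $u - u(0) \le 0$ kills one of the two sides.
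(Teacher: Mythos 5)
Your route---show that $x_0$ is a Lebesgue point of $u$, then apply Chebyshev to $|u-u(x_0)|\ge\delta$ on $E_\delta$---is a legitimate alternative to the paper's direct submean-value computation, but your execution has a genuine gap. After subtracting $u(x_0)$ the shifted function $u-u(x_0)$ is \emph{not} nonpositive: $u(x_0)$ is a finite negative number, so $u-u(x_0)=u+|u(x_0)|$ takes positive values in general. You invoke the now-false inequality $u\le 0$ in two essential places: to write $\frac{1}{c_r}\int_{B(0,r)}|u|\,d\mathrm{Leb}=-\frac{1}{c_r}\int_{B(0,r)}u\,d\mathrm{Leb}$, and to claim that $E_\delta\cap\{u\ge\delta\}=\emptyset$. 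Neither holds for the normalized $u$; negativity of a psh function is not preserved under adding a constant.

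The missing ingredient is upper semicontinuity of $u$, which is exactly what the paper leans on. For any $\epsilon>0$ there is $r_\epsilon$ with $u\le u(x_0)+\epsilon$ on $B(x_0,r_\epsilon)$, i.e.\ $(u-u(x_0))^+\le\epsilon$ there. Since $|u-u(x_0)|=2(u-u(x_0))^+-(u-u(x_0))$ and the ball averages of $u-u(x_0)$ tend to $0$ as $r\to 0$ (the monotonicity fact you correctly invoke), you get $\limsup_{r\to 0}\frac{1}{c_r}\int_{B(x_0,r)}|u-u(x_0)|\,d\mathrm{Leb}\le 2\epsilon$ for every $\epsilon$, hence $x_0$ is indeed a Lebesgue point of $u$, and Chebyshev finishes. (The same bound with $\epsilon<\delta$ also shows $E_\delta\cap\{u\ge u(x_0)+\delta\}$ misses $B(x_0,r_\epsilon)$, repairing your second claim.) Once patched, your argument rests on the same two facts---the submean inequality and upper semicontinuity---that the paper plugs directly into $\int_{B(x_0,r)}u\,d\mathrm{Leb}$ to read off $b_r/c_r\le\epsilon/\delta$ without passing through the Lebesgue-point statement.
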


\proof  By upper semicontinuity of $u$, for every constant $\epsilon>0$, one gets 
$$u(x) \le u(x_0) + \epsilon$$
for $x \in B(x,r)$ with $r<r_\epsilon$ small enough. Hence for $\epsilon<\delta$ and  $r<r_\epsilon$ and  $x \in E_\delta \cap B(x,r)$ there holds $u(x) \le u(x_0) -\delta$. This combined with the submean inequality gives
\begin{align*}
c_r u(x_0) &\le  \int_{B(x_0,r)} u d \mathrm{Leb} \\
&\le  \int_{B(x_0,r) \backslash E_\delta} u d \mathrm{Leb}+ \int_{B(x_0,r) \cap E_\delta} u d \mathrm{Leb} \\
&\le \big(u(x_0)+\epsilon\big) (c_r- b_r)+ \big(u(x_0)-\delta\big) b_r\\
& \le c_r u(x_0)+  \epsilon c_r - \delta b_r.
\end{align*}
Dividing both sides by $\delta c_r$ we obtain
$$b_r/c_r \le \epsilon/ \delta.$$
Hence $\limsup_{r \to 0} b_r/c_r \le \epsilon/\delta$ for every $\epsilon < \delta$.
Letting $\epsilon \to 0$ one gets $b_r/c_r \to 0$ as desired. %  We check now the second desired inequality.
\endproof

\begin{lemma}\label{le-plurifinetopo2}
Let $u_1,\ldots, u_m$ be negative psh functions on $\B$ and $x_0 \in \B$ so that $u_j(x_0)> -\infty$ for $1 \le j \le m$. Let $\delta>0$ be a constant and  $E_\delta:= \cup_{j=1}^m E_\delta(u_j)$.  Let $c_r:= \mathrm{Leb}(B(x_0,r))$. Then  for every negative psh function $u'$ on $U$ with $u'(x_0)> -\infty$ there holds
$$c_r^{-1}\int_{B(x_0,r)  \cap E_\delta} |u'| d \mathrm{Leb} \to 0$$ 
and
$$c_r^{-1}\int_{B(x_0,r)  \backslash E_\delta} u' d \mathrm{Leb} \to u'(x_0)$$ 
as $ r \to 0$.
\end{lemma}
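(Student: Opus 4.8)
The plan is to bound the "bad-set" contribution using Lemma~\ref{le-plurifinetopo} together with a uniform control on the mass that $u'$ can place on small balls, and then deduce the second statement from the submean inequality for $u'$ plus the first statement.

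\medskip

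\noindent
\textbf{Approach.} First I would treat the quantity $c_r^{-1}\int_{B(x_0,r)\cap E_\delta}|u'|\,d\mathrm{Leb}$. Since $u'$ is psh and negative, $|u'|=-u'\ge 0$; the difficulty is that $u'$ may be very negative near $x_0$, so I cannot merely use $b_r/c_r\to 0$ from Lemma~\ref{le-plurifinetopo} if $|u'|$ is unbounded on $B(x_0,r)$. The fix is a two-region split: choose a threshold $N>0$ and write
\[
\int_{B(x_0,r)\cap E_\delta}|u'|\,d\mathrm{Leb}
=\int_{B(x_0,r)\cap E_\delta\cap\{|u'|\le N\}}|u'|\,d\mathrm{Leb}
+\int_{B(x_0,r)\cap E_\delta\cap\{|u'|> N\}}|u'|\,d\mathrm{Leb}.
\]
The first term is at most $N\,b_r$, so after dividing by $c_r$ it tends to $0$ by Lemma~\ref{le-plurifinetopo}. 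For the second term, note $\{|u'|>N\}=\{u'<-N\}=E_N(u')$ in the notation of Lemma~\ref{le-plurifinetopo} (up to the irrelevant point $x_0$ where $u'(x_0)>-\infty$ so $x_0\notin E_N(u')$ for $N$ large). The key observation is that one can make $c_r^{-1}\int_{B(x_0,r)\cap\{u'<-N\}}|u'|\,d\mathrm{Leb}$ small uniformly in $r$ by taking $N$ large: indeed, running the computation in the proof of Lemma~\ref{le-plurifinetopo} but only throwing away the part where $u'<-N$, the submean inequality gives $c_r u'(x_0)\le (u'(x_0)+\epsilon)c_r - \int_{B(x_0,r)\cap\{u'<-N\}}(-u'-u'(x_0)-\epsilon)\,d\mathrm{Leb}$ for $r<r_\epsilon$, whence $c_r^{-1}\int_{B(x_0,r)\cap\{u'<-N\}}(-u')\,d\mathrm{Leb}\le \epsilon + |u'(x_0)|\cdot c_r^{-1}\mathrm{Leb}(B(x_0,r)\cap\{u'<-N\})$, and the last measure ratio is itself $\le \epsilon/N$ by Lemma~\ref{le-plurifinetopo} applied to $u'$ with $\delta=N$. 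So: given $\epsilon$, first fix $N$ so this tail is $<\epsilon$ for all small $r$, then for that fixed $N$ let $r\to 0$ to kill the $N b_r/c_r$ term. This proves the first assertion.

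\medskip

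\noindent
\textbf{Second assertion.} Write
\[
c_r^{-1}\int_{B(x_0,r)}u'\,d\mathrm{Leb}
= c_r^{-1}\int_{B(x_0,r)\setminus E_\delta}u'\,d\mathrm{Leb}
+ c_r^{-1}\int_{B(x_0,r)\cap E_\delta}u'\,d\mathrm{Leb}.
\]
The left-hand side converges to $u'(x_0)$: this is exactly the statement that $x_0$ is a Lebesgue point of $u'$, which holds because $u'$ is psh with $u'(x_0)>-\infty$, hence $x_0$ is a Lebesgue point by the standard fact that for a subharmonic (here psh) function $u'$ finite at $x_0$ the averages on shrinking balls converge to $u'(x_0)$ (this follows from upper semicontinuity giving the $\limsup\le u'(x_0)$ bound together with the submean inequality giving the $\liminf\ge u'(x_0)$ bound, applied to $u'-u'(x_0)$; it is also a direct consequence of Lemma~\ref{le-plurifinetopo} with a similar two-region argument, since $|u'|\le N$ off $E_N(u')$ and the $E_N(u')$ part is controlled as above). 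The second term on the right tends to $0$ by the first assertion just proved (it is bounded in absolute value by $c_r^{-1}\int_{B(x_0,r)\cap E_\delta}|u'|\,d\mathrm{Leb}$). Subtracting, $c_r^{-1}\int_{B(x_0,r)\setminus E_\delta}u'\,d\mathrm{Leb}\to u'(x_0)$, which is the claim.

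\medskip

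\noindent
\textbf{Main obstacle.} The only real subtlety, and the step I would be most careful about, is the uniform-in-$r$ smallness of the tail $c_r^{-1}\int_{B(x_0,r)\cap\{u'<-N\}}|u'|\,d\mathrm{Leb}$: one must choose $N$ before $r$, and the argument above shows this is legitimate because the submean inequality for $u'$ is scale-independent (the bound $\epsilon + |u'(x_0)|\cdot\epsilon/N$ does not involve $r$, only the requirement $r<r_\epsilon$). Everything else is a routine repetition of the computation in the proof of Lemma~\ref{le-plurifinetopo}, now carried out simultaneously for the finitely many functions $u_1,\dots,u_m$ defining $E_\delta$ and for the auxiliary function $u'$.
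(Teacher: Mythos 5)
Your proof is correct, but it takes a more elaborate route than the paper's. The paper avoids the two-region split entirely: writing $c_r^{-1}\int_{B}u'\,d\mathrm{Leb} = c_r^{-1}\int_{B\setminus E_\delta}u' + c_r^{-1}\int_{B\cap E_\delta}u'$, bounding the first term by $(u'(x_0)+\epsilon)(1-b_r/c_r)$ via upper semicontinuity, invoking $b_r/c_r\to 0$ from Lemma \ref{le-plurifinetopo} and the submean inequality $c_r^{-1}\int_B u'\ge u'(x_0)$, it obtains directly $\liminf_{r\to 0}c_r^{-1}\int_{B\cap E_\delta}u'\ge -\epsilon$ for all $\epsilon>0$; since $u'\le 0$, this squeezes the limit to $0$. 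Your $\{|u'|\le N\}$ versus $\{|u'|>N\}$ split proves the same thing but requires a separate ``submean over the tail'' computation and more bookkeeping; moreover, the threshold gymnastics you flag as the main obstacle are in fact unnecessary --- for \emph{any} fixed $N>|u'(x_0)|$ both of your terms already tend to $0$ as $r\to 0$, so there is no need to choose $N$ large depending on $\epsilon$. Two minor inaccuracies worth noting, neither of which breaks the argument: the set $\{u'<-N\}$ is not literally $E_N(u')$ (which is $\{|u'-u'(x_0)|\ge N\}$); rather $\{u'<-N\}\subset E_{N-|u'(x_0)|}(u')$ for $N>|u'(x_0)|$, so the quotient bound you quote should read $\epsilon/(N-|u'(x_0)|)$ rather than $\epsilon/N$; and the displayed intermediate expression $-u'-u'(x_0)-\epsilon$ should be $-u'+u'(x_0)+\epsilon$, though the inequality you deduce from it is the right one. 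The deduction of the second assertion from the first is the same as the paper's.
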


\proof The second desired convergence is a direct consequence of the first one. We prove now the first desired assertion.  We use again  the notation of the previous lemma $b_r= \mathrm{Leb}(B(x_0,r) \cap E_\delta)$. Since $u_j(x_0)>-\infty$, using Lemma  \ref{le-plurifinetopo}, we infer that 
\begin{align}\label{conver-brar}
b_r/c_r \to 0
\end{align}
as $r \to 0$. By upper semicontinuity of $u'$, for every constant $\epsilon>0$, one gets 
$$u'(x) \le u'(x_0) + \epsilon$$
for $x \in B(x,r)$ with $r<r'_\epsilon$ small enough. By definition of  $E_\delta$, we have
\begin{align*}
c_r^{-1}\int_{B(x_0,r)} u' d \mathrm{Leb} &=  c_r^{-1} \int_{B(x_0,r) \backslash E_\delta} u' d \mathrm{Leb}+ c_r^{-1} \int_{B(x_0,r) \cap E_\delta} u' d \mathrm{Leb} \\
&\le  \big(u'(x_0)+ \epsilon\big)(1- b_r/c_r)+ c_r^{-1}\int_{B(x_0,r) \cap E_\delta} u' d \mathrm{Leb}
\end{align*}
if $r < r'_\epsilon$. Letting $r \to 0$ and using (\ref{conver-brar}) we get
$$u'(x_0) \le u'(x_0)+\epsilon+ \liminf_{r \to 0}c_r^{-1}\int_{B(x_0,r) \cap E_\delta} u' d \mathrm{Leb}.$$
It follows that 
$$\liminf_{r \to 0}c_r^{-1}\int_{B(x_0,r) \cap E_\delta} u' d \mathrm{Leb} \ge -\epsilon$$
for every $\epsilon>0$.  The second desired inequality hence follows because $u' \le 0$. The proof is complete.
\endproof

We recall that the plurifine topology on $\B$ is the coarsest topology that makes psh functions on $\B$ continuous (\cite{BT_fine_87}).  Intersection of finitely many sets of type $\{v>0\}$ or $\{v<0\}$ for psh functions $v$ on some open subset in $\B$ form a basis of this topology. 

\begin{lemma}\label{le-basisplurifine} The family of sets of form $V \cap \cap_{j=1}^m \{x: |u_j(x)- u_j(x_0)|< \delta\}$, where $V$ runs over open subsets in $\B$ with respect to the Euclidean topology, $u_j$ bounded psh functions on $V$, and $x_0$ runs over points in $V$, is a basis of the plurifine topology. 
\end{lemma}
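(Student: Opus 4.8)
The statement to prove is Lemma~\ref{le-basisplurifine}: that sets of the form $V \cap \bigcap_{j=1}^m \{|u_j - u_j(x_0)| < \delta\}$, with $V$ Euclidean-open, $u_j$ bounded psh on $V$, and $x_0 \in V$, form a basis of the plurifine topology on $\B$. Recall the plurifine topology is generated by sets $\{v < 0\}$ and $\{v > 0\}$ for psh $v$, and that finite intersections of such sets form a basis (as noted just before the lemma). So the task is to show each basic set of that standard form contains, and is a union of, sets of our proposed form, and conversely.

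\textit{Step 1: our sets are plurifine-open.} A set $\{|u_j - u_j(x_0)| < \delta\} = \{u_j - u_j(x_0) < \delta\} \cap \{u_j - u_j(x_0) > -\delta\}$; since $u_j - u_j(x_0) \pm \delta$ is psh (a psh function plus a constant), each factor is a plurifine-open set of the standard generating form, and a Euclidean-open $V$ is plurifine-open (the plurifine topology is finer than the Euclidean one). Hence our sets are finite intersections of plurifine-open sets, so plurifine-open; this shows the proposed family consists of plurifine-open sets.

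\textit{Step 2: our sets form a basis.} It suffices to show that given any plurifine-open $W$ and any point $x_0 \in W$, there is a set $\Omega$ in our proposed family with $x_0 \in \Omega \subset W$. Since finite intersections of $\{v > 0\}$, $\{v < 0\}$ form a basis, we may assume $W = \bigcap_{i=1}^N \{v_i < 0\}$ for psh $v_i$ on some Euclidean-open $V_0 \ni x_0$ (a set $\{v > 0\}$ is $\{-v < 0\}$, so this is no loss). Because $x_0 \in W$, we have $v_i(x_0) < 0$ for each $i$. Now the key point: replace $v_i$ by $u_i := \max(v_i, v_i(x_0) - 1)$, which is psh, \emph{bounded below} near $x_0$, and agrees with $v_i$ on a neighborhood where $v_i < v_i(x_0) - 1$ is false, i.e.\ on $\{v_i > v_i(x_0) - 1\}$; more precisely on the plurifine-open set $\{u_i < 0\}$ we still have $\{u_i < 0\} \subset \{v_i < 0\}$ because $u_i \geq v_i$. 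To also get boundedness above, intersect with a small Euclidean ball $V \Subset V_0$ around $x_0$: on $\overline{V}$ each $u_i$ is bounded above (psh functions are locally bounded above), so $u_i$ is a bounded psh function on $V$. Choose $\delta$ small enough that $|u_i(x_0) - t| < \delta$ forces $t < 0$ — possible since $u_i(x_0) = v_i(x_0) < 0$, e.g.\ $\delta := \tfrac12 \min_i |v_i(x_0)|$. Then $\Omega := V \cap \bigcap_i \{|u_i - u_i(x_0)| < \delta\}$ lies in our family, contains $x_0$, and satisfies $\Omega \subset \{u_i < 0\} \subset \{v_i < 0\}$ for all $i$, hence $\Omega \subset W$. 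Combined with Step~1, this proves the family is a basis.

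\textit{Main obstacle.} The only genuine subtlety is the reduction from arbitrary psh $v_i$ (possibly unbounded, possibly $-\infty$ somewhere, only defined on $V_0$) to \emph{bounded} psh functions on a fixed Euclidean-open $V$, without losing the inclusion $\Omega \subset W$ or the membership $x_0 \in \Omega$. This is handled by the truncation $\max(v_i, v_i(x_0)-1)$ for the lower bound and restriction to a relatively compact ball for the upper bound, using that $v_i(x_0)$ is finite (forced by $x_0 \in W \subset \{v_i < 0\}$, where finiteness follows from $v_i \not\equiv -\infty$ on the connected component, or simply from $v_i(x_0) < 0 $ being a real number as part of the hypothesis $x_0 \in \{v_i<0\}$). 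Everything else is bookkeeping about the lattice of open sets. A small point to state carefully: the $u_j$ in our family need only be psh on $V$, not on all of $\B$, which matches the lemma's phrasing and makes the restriction step legitimate.
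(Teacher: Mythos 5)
There is a genuine error in Step~2 of your argument. You reduce to basic plurifine-open sets of the form $W = \bigcap_{i=1}^N \{v_i < 0\}$, justifying this by the claim that ``a set $\{v>0\}$ is $\{-v<0\}$, so this is no loss.'' But $-v$ is \emph{not} plurisubharmonic, so $\{-v<0\}$ is not of the required form $\{w<0\}$ with $w$ psh. In fact the reduction goes the wrong way: since psh functions are upper semicontinuous, a constraint $\{v<0\}$ is already \emph{Euclidean}-open and can simply be absorbed into $V$; it is the sets $\{v>0\}$ that are the genuinely new plurifine-open sets and must be retained. (If one could really discard all $\{v>0\}$ constraints, one would conclude that Euclidean-open sets alone form a basis of the plurifine topology, which is false.) This error also propagates to your finiteness argument: you claim $v_i(x_0)$ is finite ``simply from $v_i(x_0)<0$ being a real number as part of the hypothesis $x_0 \in \{v_i < 0\}$,'' but $-\infty < 0$, so $x_0 \in \{v_i<0\}$ does \emph{not} rule out $v_i(x_0)=-\infty$. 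With the correct reduction — keep the constraints $\{v_j>0\}$, absorb $\{v_j<0\}$ into $V$ — finiteness at $x_0$ is automatic because $v_j(x_0) > 0 > -\infty$.

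Once that reduction is corrected, your remaining steps are essentially the paper's proof: truncate $v_j$ to $\max\{v_j,0\}$ (which is bounded below and satisfies $\{\max\{v_j,0\}>0\} = \{v_j>0\}$), shrink $V$ to a relatively compact ball so that the truncated function is also bounded above, and choose $\delta < \min_j v_j(x_0)$. Your explicit treatment of the upper bound by shrinking $V$ is actually a small improvement over the paper's exposition, which replaces $v_j$ by $\max\{v_j,0\}$ without spelling out how boundedness above is obtained. But as written, the reduction at the start of Step~2 is wrong and must be replaced.
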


\proof
Since  the set $\{v<0\}$ for every psh function $v$ is open in the Euclidean topology, we see that the family of sets of form $V':= V \cap \cap_{j=1}^m \{v_j >0\}$, where $V$ is an open set in the Euclidean topology, $m \in \N$ and $v_j$ psh functions on $V$, is a basis of plurifine topology. It suffices to consider only bounded psh functions by replacing $v_j$ by $\max\{v_j, 0\}$ in the definition of elements in the latter basis. Let $x_0 \in V'$. Since $v_j(x_0)>0$, it is clear that one can find a constant $\delta>0$ such that $$ x_0 \in V \cap \cap_{j=1}^m \{x: |v_j(x)- v_j(x_0)|< \delta\} \subset V'.$$
This finishes the proof. 
\endproof

%The following result is well-known. Since we couldn't find a proper reference, we include a proof for readers' convenience. 

\begin{proposition}\label{pro-plurifinetopo3}
Let $V$ be a non-empty plurifinely open subset in $\B$. Let $x_0 \in V$ and $u$ be a psh function on $\B$ with $u(x_0)> -\infty$.   Let $c_r:= \mathrm{Leb}(B(x_0,r))$. Then we have 
$$c_r^{-1}\int_{B(x_0,r)  \backslash V} |u| d \mathrm{Leb} \to 0$$ 
and 
$$c_r^{-1}\int_{B(x_0,r)  \cap V} |u- u(x_0)| d \mathrm{Leb} \to 0$$ 
as $ r \to 0$.
\end{proposition}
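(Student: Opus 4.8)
The plan is to reduce the two convergences to Lemmas~\ref{le-plurifinetopo} and~\ref{le-plurifinetopo2} by means of the concrete basis of the plurifine topology given in Lemma~\ref{le-basisplurifine}.

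Since $V$ is plurifinely open and $x_0\in V$, Lemma~\ref{le-basisplurifine} furnishes a Euclidean-open set $V'\ni x_0$, finitely many bounded psh functions $u_1,\dots,u_m$ on $V'$, and $\delta>0$ such that
$$x_0\in W:=V'\cap\bigcap_{j=1}^m\{x:\ |u_j(x)-u_j(x_0)|<\delta\}\subseteq V,$$
so in particular each $u_j(x_0)$ is finite; also $u(x_0)>-\infty$ by hypothesis. All the asserted limits involve only the balls $B(x_0,r)$ with $r\to0$, so after shrinking to a small ball centered at $x_0$ and relatively compact in $V'$ and rescaling it to $\B$ — operations that affect neither the hypotheses nor the limits to be computed — and after subtracting appropriate constants, I may assume that $u,u_1,\dots,u_m$ are \emph{negative} psh functions on $\B$; replacing $u$ by $u-M$ changes $c_r^{-1}\int_{B(x_0,r)\setminus V}|u|\,d\mathrm{Leb}$ by at most $|M|\,\mathrm{Leb}(B(x_0,r)\setminus V)/c_r$, which tends to $0$ by the inclusion below together with Lemma~\ref{le-plurifinetopo} applied to each $u_j$, and it leaves $u-u(x_0)$ unchanged. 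Put $E_\delta:=\bigcup_{j=1}^m\{x:\ |u_j(x)-u_j(x_0)|\ge\delta\}$, as in Lemma~\ref{le-plurifinetopo2}.

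For the first convergence: once $r$ is small enough that $B(x_0,r)\subseteq V'$, one has $B(x_0,r)\setminus W=B(x_0,r)\cap E_\delta$, and since $W\subseteq V$ this yields
$$B(x_0,r)\setminus V\ \subseteq\ B(x_0,r)\cap E_\delta .$$
Hence $0\le c_r^{-1}\int_{B(x_0,r)\setminus V}|u|\,d\mathrm{Leb}\le c_r^{-1}\int_{B(x_0,r)\cap E_\delta}|u|\,d\mathrm{Leb}$, and the right-hand side tends to $0$ by the first conclusion of Lemma~\ref{le-plurifinetopo2} (with $u'=u$). For the second convergence, since the integrand is nonnegative and $B(x_0,r)\cap V\subseteq B(x_0,r)$, it suffices to show that $x_0$ is a Lebesgue point of $u$, i.e. $c_r^{-1}\int_{B(x_0,r)}|u-u(x_0)|\,d\mathrm{Leb}\to0$. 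This is the classical fact for psh functions finite at $x_0$: the sub-mean value inequality gives $m(r):=c_r^{-1}\int_{B(x_0,r)}u\,d\mathrm{Leb}\ge u(x_0)$, while upper semicontinuity gives, for any $\epsilon>0$, that $u\le u(x_0)+\epsilon$ on $B(x_0,r)$ for $r$ small, hence $(u-u(x_0))_+\le\epsilon$ there; using the pointwise identity $|u-u(x_0)|=2(u-u(x_0))_+-(u-u(x_0))$ one gets
$$c_r^{-1}\int_{B(x_0,r)}|u-u(x_0)|\,d\mathrm{Leb}=2\,c_r^{-1}\int_{B(x_0,r)}(u-u(x_0))_+\,d\mathrm{Leb}-\bigl(m(r)-u(x_0)\bigr)\le2\epsilon$$
for $r$ small, and letting $\epsilon\to0$ concludes.

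The content is entirely in the preceding lemmas; the only genuinely new point is the inclusion $B(x_0,r)\setminus V\subseteq B(x_0,r)\cap E_\delta$ for small $r$, which converts the plurifine openness of $V$ into the form already treated by Lemma~\ref{le-plurifinetopo2}, so I expect no serious obstacle.
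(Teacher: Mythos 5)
Your proof is correct, and it takes essentially the same overall route as the paper: invoke Lemma~\ref{le-basisplurifine} to replace $V$ by a basic plurifinely open set $W=V'\cap\bigcap_j\{|u_j-u_j(x_0)|<\delta\}$, then reduce to Lemmas~\ref{le-plurifinetopo} and~\ref{le-plurifinetopo2}. The one genuine point of departure is your treatment of the second limit. The paper reduces to $V=W$ and appeals to Lemma~\ref{le-plurifinetopo2}, whose second conclusion only gives $c_r^{-1}\int_{B(x_0,r)\setminus E_\delta}u\,d\mathrm{Leb}\to u(x_0)$ \emph{without} the absolute value, so one still has to split off $(u-u(x_0))_+$ via upper semicontinuity, and also to handle the discrepancy between $B\cap V$ and $B\cap W$ by routing the extra piece through the first limit; the paper leaves both of these steps implicit. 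You sidestep both issues at once by proving the strictly stronger statement that $x_0$ is a Lebesgue point of $u$ over the \emph{full} ball, using only the sub-mean value inequality, upper semicontinuity, and the identity $|a|=2a_+-a$. That is cleaner and makes no use of the plurifine structure for the second limit at all. For the first limit your argument (inclusion $B(x_0,r)\setminus V\subseteq B(x_0,r)\cap E_\delta$ plus the first conclusion of Lemma~\ref{le-plurifinetopo2}) matches the paper's, and your normalization to negative psh functions after shrinking to a relatively compact subball is a standard harmless reduction that you justify adequately.
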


\proof By Lemma \ref{le-basisplurifine}, it is enough to prove the desired assertions for $V=\cap_{j=1}^m \{x: |u_j(x)- u_j(x_0)|< \delta\}$, where $u_j$ bounded psh functions on some open subset $V'$. The desired assertions now follow from Lemma \ref{le-plurifinetopo2}.
\endproof

A real function $f$ is plurifinely continuous in a plurifinely open set $W$ if and only if for every open interval $I \subset \R$ and for every $x \in W$, there exists a plurifinely open set $B$ containing $x$ such that $f(B) \subset I$. The following result is the second ingredient in the proof of Theorem \ref{th:Lebesgues}.

\begin{proposition}\label{pro-plurifinetopoW*} (\cite[Theorem 22]{VignyW}) Every function $\varphi$ in $W^*(\B)$ is plurifinely continuous outside some pluripolar set. Precisely, there exists a family $\mathcal{F}$ of bounded psh functions on $\B$ such that 
$$E:= \cap_{v \in \mathcal{F}}\{v \le -1\}$$
is pluripolar (and is closed in the plurifine topology) and for every $x \in \B \backslash E$ and $v \in \mathcal{F}$ with $x \in \{v >-1\}$, one has that   $\varphi(x') \to \varphi(x)$ as $x' \to x$ and $x'$ remains in $\{v >-1\}$.  %is plurifinely continuous on $\B \backslash E$. 
\end{proposition}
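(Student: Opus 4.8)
This is \cite[Theorem 22]{VignyW}; here is the line of argument I would follow. The plan is to derive the statement from the quasi-continuity of $\varphi$ with respect to the Bedford--Taylor capacity (which follows from \cite[Theorem 2.10]{DinhMarVu}) by realizing the exceptional open sets as contact sets $\{u^*_E=-1\}$ of relative extremal functions.

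First I would fix a decreasing sequence of open sets. By quasi-continuity, for each $j\ge 1$ there is an open $V_j\subset\B$ with $\mathrm{Cap}(V_j,\B)\le 2^{-j}$ such that $\varphi$ is continuous on $\B\setminus V_j$. Replacing $V_j$ by $V_1\cap\cdots\cap V_j$ (whose capacity is still at most $2^{-j}$ by monotonicity, and on whose complement $\varphi$ is still continuous, that complement being a finite union of closed sets on each of which $\varphi$ is continuous), I may assume $V_1\supseteq V_2\supseteq\cdots$. I then set $v_j:=u^*_{V_j}$, the regularized relative extremal function; it is a bounded psh function on $\B$ with $-1\le v_j\le 0$, and $v_j=-1$ on $V_j$, because $V_j$ is open, so that $u_{V_j}\equiv -1$ there and is already upper semicontinuous. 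I take $\mathcal F:=\{v_j:j\ge 1\}$ and $E:=\bigcap_{j\ge 1}\{v_j\le -1\}=\bigcap_{j\ge 1}\{v_j=-1\}$.

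It then remains to verify the three required properties of $\mathcal F$ and $E$. The set $E$ is closed in the plurifine topology, being an intersection of the plurifinely closed sets $\{v_j\le -1\}$ (each $v_j$ is plurifinely continuous). For pluripolarity I would use the classical fact that the contact set of a relative extremal function has the same capacity as the defining set: $\mathrm{Cap}(\{u^*_{V_j}=-1\},\B)=\mathrm{Cap}(V_j,\B)\le 2^{-j}$ (equivalently $u^*_{\{u^*_{V_j}=-1\}}=u^*_{V_j}$; see \cite{BedfordTaylor} and \cite{Klimek}). Since $E\subseteq\{v_j=-1\}$ for every $j$, monotonicity of the capacity then gives $\mathrm{Cap}(E,\B)\le\inf_j 2^{-j}=0$, so $E$ is a Borel set of zero capacity, hence pluripolar. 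Finally, if $x\in\B\setminus E$ and $v_j\in\mathcal F$ is any function with $x\in\{v_j>-1\}$, then $\{v_j>-1\}$ is a plurifinely open set disjoint from $V_j$ (as $v_j=-1$ on $V_j$), hence contained in $\B\setminus V_j$, on which $\varphi$ is Euclidean-continuous; therefore $\varphi(x')\to\varphi(x)$ as $x'\to x$ with $x'$ remaining in $\{v_j>-1\}$, which is exactly the asserted continuity.

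The one genuinely nontrivial input, and the step I expect to require the most care, is the capacity identity $\mathrm{Cap}(\{u^*_{V_j}=-1\},\B)=\mathrm{Cap}(V_j,\B)$, together with the minor technical point that $V_j$ need not be relatively compact in $\B$; the latter is handled by intersecting with a slightly smaller ball before applying (\ref{eq-caprelative}) and using that the assertion is local and that a countable union of pluripolar sets is pluripolar. Everything else is bookkeeping with the plurifine topology and monotonicity of the capacity.
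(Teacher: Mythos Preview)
Your proposal is correct and follows essentially the same route as the paper's proof. Both start from quasi-continuity, replace each exceptional open set by the contact set of its relative extremal function, and use the identity $u^*_{\{u^*_K=-1\}}=u^*_K$ (hence equality of capacities) to conclude that the intersection is pluripolar; the paper carries out this identity explicitly rather than citing it, enforces relative compactness via an exhaustion $(U_j)$ of $\B$ with $K_j:=Y_{1/j}\cap U_j$ (your ``intersect with a smaller ball'' is the same device), and takes for $\mathcal F$ the full family of bounded psh functions equal to $-1$ on some $K_j$ rather than only the extremal functions $u^*_{V_j}$, but these are cosmetic differences.
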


 We always identify $\varphi$ with a fixed representative of $\varphi$ (see Introduction for references). Thus, to be precise, the statement of Proposition \ref{pro-plurifinetopoW*} means that if $\varphi'$ is a representative of $\varphi$, then $\varphi'$ is plurifinely continuous outside some pluripolar set.

\proof We recall the proof for readers' convenience.  By \cite[Theorem 2.10]{DinhMarVu} or \cite{VignyW}, we know that for every constant $\epsilon>0$, there exists an open subset $Y_\epsilon$ in $\B$ so that $\mathrm{Cap}(Y_\epsilon, \B) \le \epsilon$ and $\varphi$ is continuous on $\B \backslash Y_\epsilon$. Let $(U_j)_{j \ge 1}$ be an increasing sequence of relatively open subsets in $\B$ so that and $\overline U_j \subset U_{j+1}$ and $\B= \cup_{j \ge 1} U_j$. Put $K_j:= Y_{1/j}  \cap U_j$ which is relatively compact in $\B$. Hence $\varphi$ is continuous on $U_j \backslash K_j$.  We have 
$$\mathrm{Cap}(K_j, \B) \le \mathrm{Cap}(Y_{1/j}, \B) \le 1/j$$
for every $j$.  Let $\mathcal{F}_j$ be the set of psh functions $u$ on $\B$ such that $-1\le u \le 0$ on $\B$ and $u = -1$ on $K_j$. 
Let 
$$u_j^*= u_{K_j}^*:= \big(\sup \{u \text{ psh  on }\B: u \le 0 \text{ on } \B, \quad u \le -1 \text{ on } K_j\}\big)^*,$$
By considering $\max\{u,-1\}$ instead of $u$ in the envelope defining $u_j$, one sees that 
$$u_j^*:= \big(\sup \{u: u \in \mathcal{F}_j\}\big)^*,$$
We define 
$$K'_j:= \cap_{u \in \mathcal{F}_j} \{u \le -1\}$$
which contains $K_j$. Note that $K'_j$ is relatively compact in $\B$ because  $\max\{M\rho, -1\} \in \mathcal{F}_j$ for $M$ large enough (recall $\rho= \|x\|^2-1$ and $K_j \Subset \B$) and $K'_j$ is plurifinely closed by definition of the plurifine topology.  Observe that if $v$ is a negative psh function on $\B$ so that $v \le -1$ on $K_j$ then $v \le -1$ on $K'_j$ because $\max\{v,-1\} \in \mathcal{F}_j$. We infer that 
$$u_j^*= u_{K'_j}^*.$$  
This combined with the fact that both $K_j$ and $K'_j$ are  relatively compact Borel sets in $\B$ gives 
$$\mathrm{Cap}(K_j, \B)= \int_\B (dd^c u^*_j)^k= \int_\B (dd^c u_{K'_j}^*)^k =\mathrm{Cap}(K'_j, \B).$$
We thus obtain that $\varphi$ is plurifinely continuous on $U_j \backslash K'_j$ and $\mathrm{Cap}(K'_j, \B) \le 1/j$. Let 
$$E:= \cap_{j \ge 1} K'_j= \cap_{u \in \mathcal{F}} \{u \le -1\},$$
where $\mathcal{F}:= \cup_{j \ge 1}\mathcal{F}_j$ which is a family of psh functions $-1 \le v \le 0$ on $\B$. Hence $\mathrm{Cap}(E,\B) \le \mathrm{Cap}(K'_j,\B) \le 1/j$ for every $j$. It follows that
$$\mathrm{Cap}(E, \B)=0,$$
in other words, $E$ is pluripolar (note $E$ is Borel). Since $K'_j$ is plurifinely closed, we see that $\B \backslash K'_j$ is plurifinely open. We don't need this observation for the rest of the proof. 

We note that $K_j \subset K'_j$. Let $x \in \B \backslash E$. Hence there are $j \in \N$ and $v \in \mathcal{F}_j$ so that 
$$x \in \{v >-1\} \cap U_j \subset U_j \backslash K'_j.$$
 This combined with the continuity of $\varphi $ on $U_j \backslash K'_j$ gives 
$\varphi(x') \to \varphi(x)$ as $x' \to x$ and $x' \in U_j \backslash K'_j$ (in particular it holds when $x'$ remains in $\{v>-1\}\cap U_j$). % is plurifinely continuous on the plurifinely open set $\cup_{j\ge 1} (\B \backslash K''_j)= \B \backslash E$. 
This finishes the proof. 
\endproof

Let $(\mu_{\epsilon})_{\epsilon \in (0,1]}$ be a sequence of probability measures on $\C^k$ such that $\supp \mu_{\epsilon} \subset B(0,\epsilon)$ and $$\epsilon^{2k} \mu_\epsilon \le M \mathrm{Leb},$$ for some constant $M>0$ independent of $\epsilon$. Observe that $\mu_\epsilon$ converges weakly to  $\delta_0$ as $\epsilon \to 0$.  We say that such a sequence is \emph{an approximation of unity}.  Two important examples of approximations of unity are 
$$\mu_{1,\epsilon}:= \frac{1}{\mathrm{Leb}(B(0,\epsilon))} \bold{1}_{B(0,\epsilon)} \mathrm{Leb}, \quad \mu_{2, \epsilon}:= \epsilon^{-2k}\chi(x/ \epsilon) \mathrm{Leb},$$
 where $\chi$ is a radial cut-off function as above. We recall the following standard fact.

\begin{lemma} \label{le-lebespoint} Let $\varphi \in L^1_{loc}(U)$ and $x \in U$ be a point such that
	$$\lim_{\epsilon \to 0} \frac{1}{\mathrm{Leb}(B(x,\epsilon))} \int_{B(x,\epsilon)} |\varphi- \varphi(x)| d \mathrm{Leb}=0.$$
	Then one has 
	$$\lim_{\epsilon \to 0} \int_{\C^k} |\varphi(x+y)- \varphi(x)| d \mu_\epsilon(y)=0,$$
	for every approximation of unity $(\mu_\epsilon)_\epsilon$.
\end{lemma}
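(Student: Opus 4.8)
The plan is to reduce the statement to the hypothesis by a direct change of the measure against which one integrates, using only the two defining properties of an approximation of unity: that $\supp \mu_\epsilon \subset B(0,\epsilon)$ and that $\epsilon^{2k}\mu_\epsilon \le M\,\mathrm{Leb}$. First I would observe that, since $\mu_\epsilon$ is carried by $B(0,\epsilon)$, the integral over $\C^k$ is really an integral over $B(0,\epsilon)$, so it suffices to estimate $\int_{B(0,\epsilon)} |\varphi(x+y)-\varphi(x)|\,d\mu_\epsilon(y)$. Applying the domination $\epsilon^{2k}\mu_\epsilon \le M\,\mathrm{Leb}$ turns this into the bound $M\epsilon^{-2k}\int_{B(0,\epsilon)} |\varphi(x+y)-\varphi(x)|\,d\mathrm{Leb}(y)$, which is a genuine Lebesgue-measure average, up to the normalization factor.

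The second step is to match this with the quantity appearing in the hypothesis. Since $\C^k \cong \R^{2k}$, one has $\mathrm{Leb}(B(0,\epsilon)) = \kappa_k\,\epsilon^{2k}$ for the dimensional constant $\kappa_k$ equal to the Lebesgue volume of the unit ball in $\R^{2k}$; hence $\epsilon^{-2k} = \kappa_k/\mathrm{Leb}(B(0,\epsilon))$. Substituting and changing variables $y \mapsto x+y$ (which maps $B(0,\epsilon)$ onto $B(x,\epsilon)$ and preserves Lebesgue measure), the right-hand side becomes $M\kappa_k\cdot \frac{1}{\mathrm{Leb}(B(x,\epsilon))}\int_{B(x,\epsilon)} |\varphi - \varphi(x)|\,d\mathrm{Leb}$. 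By the assumed Lebesgue-point property at $x$, this tends to $0$ as $\epsilon \to 0$, and since $0 \le \int_{\C^k}|\varphi(x+y)-\varphi(x)|\,d\mu_\epsilon(y)$ is squeezed between $0$ and a quantity going to $0$, the claim follows.

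There is essentially no obstacle here: the only point requiring a modicum of care is bookkeeping the real dimension $2k$ correctly in the volume normalization, so that the constant $M$ from the approximation of unity and the ball-volume constant $\kappa_k$ combine into a single harmless multiplicative constant independent of $\epsilon$. Everything else is the triangle inequality, positivity, and the monotone/dominated passage to the limit already built into the hypothesis.
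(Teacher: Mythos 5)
Your proof is correct and is essentially the same argument the paper gives: restrict the integral to $B(0,\epsilon)$ using the support condition, dominate $\mu_\epsilon$ by $M\epsilon^{-2k}\mathrm{Leb}$, and recognize the resulting expression (after translation and accounting for the volume constant) as the Lebesgue-point average in the hypothesis. The paper compresses the bookkeeping into a single $\lesssim$, but the steps and ingredients are identical.
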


\proof Recall that $\supp \mu_\epsilon \subset B(0,\epsilon)$ and  there exists a constant $M>0$ such that $\epsilon^{2k} \mu_\epsilon \le M \mathrm{Leb}$. It follows that
$$\int_{\C^k} |\varphi(x+y)- \varphi(x)| d \mu_\epsilon(y) \lesssim \epsilon^{-2k} \int_{B(x, \epsilon)}|\varphi - \varphi(x)| d \mathrm{Leb}$$
which converges to $0$ as $\epsilon \to 0$ by the hypothesis. The desired convergence follows. This finishes the proof.
\endproof

\begin{proof}[End of the proof of Theorem \ref{th:Lebesgues}]Let us now finish the proof of Theorem \ref{th:Lebesgues}. It suffices to work locally and we can assume that $U= \B$ so by Theorem \ref{tm:main?} there exists  a psh function $u$ on $\B$ such that  
	\begin{align}	\label{ine-chantrenvarphi}	
		|\varphi|\leq -u
	\end{align}
	on $\B$.   By Proposition \ref{pro-plurifinetopoW*}, there exists a plurifinely closed pluripolar set $E$ such that $u$ is locally finite outside $E$ and $\varphi$ is  plurifinely continuous outside $E$. Let $x_0 \not \in E$.  
Let $V$ be a plurifinely open neighborhood of $x_0$ in $\B \backslash E$.  Let $c_r:= \mathrm{Leb}(B(x_0,r))$.  	Using (\ref{ine-chantrenvarphi}),  one obtains
\begin{align*}
c_r^{-1}\int_{B(x_0,r)} |\varphi- \varphi(x_0)| d \mathrm{Leb}&=
c_r^{-1}\int_{B(x_0,r)  \cap V} |\varphi- \varphi(x_0)| d \mathrm{Leb}+c_r^{-1}\int_{B(x_0,r)  \backslash V} |\varphi- \varphi(x_0)| d \mathrm{Leb} \\
& \le c_r^{-1}\int_{B(x_0,r)  \cap V} |\varphi- \varphi(x_0)| d \mathrm{Leb}+c_r^{-1}\int_{B(x_0,r)  \backslash V} (|u|+|u(x_0)|) d \mathrm{Leb}.
\end{align*}	
Let $I_1(r),I_2(r)$ be the first and second terms in the right-hand side of the last inequality. Since  $\varphi$ is plurifinely  continuous at $x_0$, one sees that for every constant $\epsilon>0$, there exists a constant $r_\epsilon>0$ such that  $|\varphi- \varphi(x_0)|\le \epsilon$ on $B(x_0,r) \cap V$ for $r< r_\epsilon$. It follows that $I_1(r) \le \epsilon$ for $r< r_\epsilon$. Hence $\lim_{r \to 0} I_1(r) =0$.   
On the other hand, the term $I_2(r)$ also tends to $0$ as $r \to 0$ by Proposition \ref{pro-plurifinetopo3} applied to $u$ and $u(x_0)$. Hence $x_0$ is a Lebesgue point of $\varphi$. Hence the complement of Lebesgue points of $\varphi$ is contained in the pluripolar set $E$. The second desired assertion follows from this and Lemma \ref{le-lebespoint}. The proof is finished. 
\end{proof}

\begin{remark} \normalfont We refer the reader to a related work \cite{SIR}, where Lebesgue sets were studied: in the article, the authors establish the fact that every bounded subharmonic function in a domain in $\C$, restricted to any real line, possesses the Lebesgue property at each point.   
\end{remark}

 \end{document}